\newtheorem{thm}{Theorem}[section]
\newtheorem{cor}[thm]{Corollary}
\newtheorem{lem}[thm]{Lemma}
\newtheorem{exm}[thm]{Example}
\newtheorem{prop}[thm]{Proposition}
\theoremstyle{definition}
\newtheorem{defn}[thm]{Definition}
\theoremstyle{remark}
\newtheorem{rem}[thm]{\bf Remark}
\numberwithin{equation}{section}
\begin{document}
\title[Liftable derived equivalences and objective categories]{Liftable derived equivalences and objective categories}
\author[Xiaofa Chen, Xiao-Wu Chen] {Xiaofa Chen, Xiao-Wu Chen$^*$}

\thanks{$^*$ The corresponding author}
\subjclass[2010]{18E30, 16G10, 16D90}
\date{\today}

\thanks{E-mail: cxf2011$\symbol{64}$mail.ustc.edu.cn, xwchen$\symbol{64}$mail.ustc.edu.cn}
\keywords{derived equivalence, dg category, enhancement, liftable, objective}%

\maketitle

\dedicatory{}%
\commby{}%

\begin{abstract}
We give two proofs to  the following theorem and its generalization: if a finite dimensional algebra $A$ is derived equivalent to a smooth projective scheme, then any derived equivalence between $A$ and another algebra $B$ is standard, that is, isomorphic to the derived tensor functor by a two-sided tilting complex. The main ingredients of the proofs are as follows: (1)  between the derived categories of two module categories, liftable functors coincide with standard functors; (2) any derived equivalence between a module category and an abelian category is uniquely factorized as the composition of a pseudo-identity and a liftable derived equivalence; (3) the derived category of coherent sheaves on a certain projective scheme is triangle-objective, that is, any triangle autoequivalence on it, which  preserves the the isomorphism classes of complexes, is necessarily  isomorphic to the identity functor.
\end{abstract}

\section{Introduction}

Let $k$ be a field. For a finite dimensional $k$-algebra $A$, we denote by $A\mbox{-mod}$ the abelian category of finitely generated $A$-modules and by $\mathbf{D}^b(A\mbox{-mod})$ its bounded derived category. By a \emph{derived equivalence} between two algebras $A$ and $B$, we mean a $k$-linear triangle equivalence $F\colon \mathbf{D}^b(A\mbox{-mod})\rightarrow \mathbf{D}^b(B\mbox{-mod})$. It is a well-known open question \cite{Ric} whether any derived equivalence is \emph{standard}, that is, isomorphic to the derived tensor functor by a two-sided tilting complex. We refer to the introduction of \cite{CZ} for known cases where the question is answered affirmatively.

The geometric analogue of standard functors are Fourier-Mukai functors, where two-sided tilting complexes are replaced by Fourier-Mukai kernels. The famous theorem in \cite{Or} states that any derived equivalence between smooth projective schemes is a Fourier-Mukai functor.

We are inspired by the following theorem, which seems to be folklore. It provides a large class of algebras, for which the above open question is answered affirmatively.

\vskip 3pt

\noindent {\bf Theorem.}\quad \emph{Let $A$ be a finite dimensional algebra. Assume that there is a derived equivalence between $A$ and a smooth projective scheme. Then any derived equivalence   $F\colon \mathbf{D}^b(A\mbox{-}{\rm mod})\rightarrow \mathbf{D}^b(B\mbox{-}{\rm mod})$ is standard.}

\vskip 3pt

The goal is to give a proof to this theorem and its generalization. Indeed, we give two proofs. The first proof uses the homotopy category of small dg categories and dg lifts of triangle functors, while  the second one is more elementary and uses the notion of triangle-objective categories.

Let us describe the content of this paper. In Section 2, we recall basic facts about dg categories and dg enhancements. In Section 3, we recall the homotopy category of small dg categories and the notion of liftable functors. In Section 4, we prove that between the bounded derived categories of two module categories,  liftable functors coincide with standard functors; see Theorem \ref{thm:lift}. We mention that this result also seems to be folklore.

In Section 5, we prove the following factorization theorem: any derived equivalence between a module category and an abelian category is uniquely factorized as the composition of a pseudo-identity  in the sense of \cite{CY} and a liftable derived equivalence; see Theorem \ref{thm:fac}. Then we give the first proof to the above theorem.

In Section 6, we introduce the following notion of triangle-objective categories: a triangulated category is \emph{triangle-objective},  if any  triangle autoequivalence on it, which preserves the isomorphism classes of objects, is isomorphic to the identity functor. We prove that the bounded derived category of coherent sheaves on a certain projective scheme is triangle-objective; see Proposition~\ref{prop:coh}. It implies the above theorem, when the field $k$ is algebraically closed.

Throughout, we work over a fixed field $k$. All algebras, categories and functors are required to be $k$-linear. The word dg stands for ``differential graded". In the dg setting, all morphisms and elements are by default homogeneous. Modules are by default left modules.

\section{DG categories and enhancements}

In this section, we recall basic facts and notation for dg categories and enhancements. The standard references for dg categories are \cite{Kel94, Dri}.

Let $\mathcal{C}$ be a dg category. For two objects $X$ and $Y$, the Hom complex is denoted by $\mathcal{C}(X, Y)=(\bigoplus_{p\in \mathbb{Z}} \mathcal{C}(X, Y)^p, d=d_{X, Y})$, where $d$ is the differential of degree one satisfying the graded Leibniz rule.  An element in the  subspace $\mathcal{C}(X, Y)^p$ will be called a homogeneous  morphism of degree $p$ with the notation $|f|=p$.

We denote by $H^0(\mathcal{C})$ the \emph{homotopy category} of $\mathcal{C}$, which has the same objects as $\mathcal{C}$ and whose Hom spaces are given by the zeroth cohomologies $H^0(\mathcal{C}(X, Y))$. Similarly, one has the category $Z^0(\mathcal{C})$, whose Hom spaces are given by the zeroth cocycles $Z^0(\mathcal{C}(X, Y))$.

The \emph{opposite} dg category $\mathcal{C}^{\rm op}$ has the same objects and Hom complexes as  $\mathcal{C}$, whose composition $f'\circ^{\rm op} f$ of morphisms $f'$ and $f$ is given by $(-1)^{|f|\cdot |f'|} f\circ f'$. For two dg categories $\mathcal{C}$ and $\mathcal{D}$, we have their  tensor dg category $\mathcal{C}\otimes \mathcal{D}$, whose objects are  the pairs $(C, D)$  with $C\in \mathcal{C}$ and $D\in \mathcal{D}$, and whose Hom complexes are the tensor product of the corresponding Hom complexes in $\mathcal{C}$ and $\mathcal{D}$.

In the following examples, we fix the notation for our concerned dg categories. Let $A$ be a finite dimensional algebra. Denote by $A\mbox{-Mod}$ the category of left $A$-modules. In particular, $k\mbox{-Mod}$ denotes the category of $k$-vector spaces.

\begin{exm}\label{exm:C_dg}
{\rm  Let $\mathcal{A}$ be an additive category. A complex in $\mathcal{A}$ is denoted by $X=(\bigoplus_{p\in \mathbb{Z}} X^p, d_X)$, where  the differentials $d_X^p\colon X^p\rightarrow X^{p+1}$ satisfy $d_X^{p+1}\circ d_X^p=0$. We denote by $C_{\rm dg}(\mathcal{A})$ the dg category formed by  complexes in $\mathcal{A}$. The $p$-th component  of the Hom complex $C_{\rm dg}(\mathcal{A})(X, Y)$ is given by
    $$C_{\rm dg}(\mathcal{A})(X, Y)^p=\prod_{n\in \mathbb{Z}} {\rm Hom}_\mathcal{A}(X^n, Y^{n+p}),$$
    whose elements will be denoted by $f=\{f^n\}_{n\in \mathbb{Z}}$. The differential $d$ acts on $f$ such that $d(f)^n=d_Y^{n+p}\circ f^n-(-1)^pf^{n+1}\circ d_X^n$ for each $n\in \mathbb{Z}$. We are also interested in the full dg subcategory $C^b_{\rm dg}(\mathcal{A})$ formed by bounded complexes.

    We observe that its homotopy category $H^0(C_{\rm dg}(\mathcal{A}))$ coincides with the classical  homotopy category $\mathbf{K}(\mathcal{A})$ of complexes in $\mathcal{A}$, where $H^0(C^b_{\rm dg}(\mathcal{A}))$ corresponds to the bounded homotopy category $\mathbf{K}^b(\mathcal{A})$.

      For two complexes $X$ and $Y$ of $A$-modules, the traditional notation of the Hom complex $C_{\rm dg}(A\mbox{-Mod})(X, Y)$ is ${\rm Hom}_A(X, Y)$.}
    \end{exm}

\begin{exm} {\rm
The dg category $C_{\rm dg}(k\mbox{-Mod})$ is usually denoted by $C_{\rm dg}(k)$. Let $\mathcal{C}$ be a dg category. By a left dg $\mathcal{C}$-module, we mean a dg functor $M\colon \mathcal{C}\rightarrow C_{\rm dg}(k)$. The following notation will be convenient: for a morphism $f\colon X\rightarrow Y$ in $\mathcal{C}$ and $m\in M(X)$, the resulting element $M(f)(m)\in M(Y)$ is written as $f.m$. Here the dot indicates the left $\mathcal{C}$-action on $M$. We denote by $\mathcal{C}\mbox{-{\rm DGMod}}$ the dg category formed by left dg $\mathcal{C}$-modules, whose Hom complexes are defined similarly as in Example~\ref{exm:C_dg}.

Denote by $\mathcal{C}\mbox{-{\rm DGProj}}$ the full dg subcategory of $\mathcal{C}\mbox{-{\rm DGMod}}$ formed by dg-projective $\mathcal{C}$-modules. Here, we recall that a dg $\mathcal{C}$-module is \emph{dg-projective} if and only if it is isomorphic to a direct summand of a semi-free dg $\mathcal{C}$-module in $Z^0(\mathcal{C}\mbox{-{\rm DGMod}})$; compare \cite[3.1]{Kel94} and \cite[Appendix B.1]{Dri}.

    We identify a left $\mathcal{C}^{\rm op}$-modules with a right dg $\mathcal{C}$-module. Then we obtain the dg category $\mbox{{\rm DGMod}-}\mathcal{C}$ of right dg $\mathcal{C}$-modules. For a right dg $\mathcal{C}$-module $N$, a morphism $f\colon X\rightarrow Y$ in $\mathcal{C}$  and $m\in N(Y)$,  the right $\mathcal{C}$-action on $N$ is given such that $m.f=(-1)^{|f|\cdot |m|}N(f)(m)\in N(X)$.

    We identify a dg $\mathcal{C}$-$\mathcal{D}$-bimodule $M\colon  \mathcal{D}^{\rm op} \otimes \mathcal{C} \rightarrow C_{\rm dg}(k)$ with a left dg $\mathcal{C}\otimes \mathcal{D}^{\rm op}$-module. We observe that for each object $C\in \mathcal{C}$, $M(-, C)$ is a right dg $\mathcal{D}$-module. Given a dg functor $F\colon \mathcal{C}\rightarrow \mathcal{D}$, we have a dg $\mathcal{C}$-$\mathcal{D}$-bimodule $M_F$ defined such that $M_F(D, C)=\mathcal{D}(D, F(C))$. }
\end{exm}

\begin{exm}\label{exm:dgpr}
{\rm Let $\mathcal{C}$ be a dg category. Denote by $\mathbf{B}$ the bar resolution of the $\mathcal{C}$-$\mathcal{C}$-bimodule $\mathcal{C}$; see \cite[6.6]{Kel94}. Then we have the following dg functor
$$\mathbf{p}_\mathcal{C}=\mathbf{B}\otimes_\mathcal{C}-\colon \mathcal{C}\mbox{-{\rm DGMod}} \longrightarrow \mathcal{C}\mbox{-{\rm  DGProj}}.$$
For each left dg $\mathcal{C}$-module $M$, $\mathbf{p}_\mathcal{C}(M)$ is a semi-free $\mathcal{C}$-modules, and there is a canonical surjective quasi-isomorphism $\mathbf{p}_\mathcal{C}(M)\rightarrow M$. We call $\mathbf{p}_\mathcal{C}$ the \emph{dg-projective resolution functor} of $\mathcal{C}$.}
\end{exm}

Let $\mathcal{C}$ be a dg category. Recall that both $H^0(\mathcal{C}\mbox{-}{\rm DGMod})$  and $H^0(\mathcal{C}\mbox{-}{\rm DGProj})$  have  natural triangulated structures. The \emph{derived category} $\mathbf{D}(\mathcal{C})$ is the Verdier quotient of $H^0(\mathcal{C}\mbox{-}{\rm DGMod})$ by the triangulated subcategory of  acyclic modules. It is well known that the canonical functor $H^0(\mathcal{C}\mbox{-}{\rm DGProj}) \rightarrow \mathbf{D}(\mathcal{C})$ is a triangle equivalence; see \cite[Theorem 3.1]{Kel94}.

The Yoneda functor
$$\mathbf{Y}_\mathcal{C}\colon \mathcal{C}\longrightarrow {\rm DGMod}\mbox{-}\mathcal{C}, \quad X\mapsto \mathcal{C}(-, X)$$
is a fully-faithful dg functor. In particular, it induces a full embedding
$$H^0(\mathbf{Y}_\mathcal{C})\colon H^0(\mathcal{C}) \longrightarrow H^0({\rm DGMod}\mbox{-}\mathcal{C}).$$
Recall that $H^0({\rm DGMod}\mbox{-}\mathcal{C})$ has a natural triangulated structure. The dg category $\mathcal{C}$ is said to be \emph{pretriangulated}, provided that the essential image of $H^0(\mathbf{Y}_{\mathcal{C}})$ is a triangulated subcategory. The terminology is justified by the evident fact: the homotopy category $H^0(\mathcal{C})$ of a pretriangulated dg category $\mathcal{C}$ has a canonical triangulated structure.

Let $\mathcal{T}$ be a triangulated category. By an \emph{enhancement} of $\mathcal{T}$, we mean a triangle equivalence $E\colon \mathcal{T}\rightarrow H^0(\mathcal{C})$ with $\mathcal{C}$ a pretriangulated dg category. In general, the enhancement is not necessarily unique. We refer to \cite{LO, CS} for more details.

 Let $\mathcal{A}$ be an abelian category. We are mainly concerned with the bounded derived category $\mathbf{D}^b(\mathcal{A})$. As we have seen in Example \ref{exm:C_dg}, $\mathcal{C}^b_{\rm dg}(\mathcal{A})$ provides a canonical enhancement for  $\mathbf{K}^b(\mathcal{A})$. Following \cite[9.8]{Kel05}, we now recall the canonical enhancement of  $\mathbf{D}^b(\mathcal{A})$.

\begin{exm}\label{exm:dgd}
{\rm  Consider the dg category $C^b_{\rm dg}(\mathcal{A})$ of bounded complexes, and its full dg subcategory $C^{b, {\rm ac}}_{\rm dg}(\mathcal{A})$ formed by acyclic complexes. The \emph{ bounded dg derived category} of $\mathcal{A}$ is defined to be  the Drinfeld dg quotient
$$\mathbf{D}_{\rm dg}^b(\mathcal{A})=C^b_{\rm dg}(\mathcal{A})/{C^{b, {\rm ac}}_{\rm dg}(\mathcal{A})}.$$
Recall that the dg category $\mathbf{D}_{\rm dg}^b(\mathcal{A})$ is obtained from $C^b_{\rm dg}(\mathcal{A})$ by freely adding new morphisms $\varepsilon_{X}\colon X\rightarrow X$ of degree $-1$ for each acyclic complex $X$,  such that $d(\varepsilon_X)=1_X$; see \cite[3.1]{Dri}. By \cite[Lemma 1.5]{LO}, $\mathbf{D}_{\rm dg}^b(\mathcal{A})$ is pretriangulated. By \cite[Theorem 3.4]{Dri}, there is a canonical isomorphism of triangulated categories
$${\rm can}_\mathcal{A}\colon \mathbf{D}^b(\mathcal{A})\longrightarrow H^0(\mathbf{D}_{\rm dg}^b(\mathcal{A})),$$
which acts on objects by the identity. We will call ${\rm can}_\mathcal{A}$ the \emph{canonical enhancement} of $\mathbf{D}^b(\mathcal{A})$.}
\end{exm}

\section{The homotopy category and liftable functors}

In this section, we recall the notion of liftable triangle functors between bounded derived categories, and the homotopy category of small dg categories.

Recall that a dg functor $F\colon \mathcal{C}\rightarrow \mathcal{D}$ is a \emph{quasi-equivalence}, provided that the induced chain maps $\mathcal{C}(C, C')\rightarrow \mathcal{D}(F(C), F(C'))$ are all quasi-isomorphisms, and that $H^0(F)\colon H^0(\mathcal{C})\rightarrow H^0(\mathcal{D})$ is dense. In this situation, $H^0(F)$ is an equivalence.

\begin{lem}\label{lem:H0}
Let $F\colon \mathcal{C}\rightarrow \mathcal{D}$ be a dg functor between two pretriangulated dg categories. Assume that $H^0(F)$ is an equivalence. Then $F$ is a quasi-equivalence.
\end{lem}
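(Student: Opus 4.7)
The plan is to verify the two conditions in the definition of a quasi-equivalence recalled just before the lemma. The density of $H^0(F)$ is immediate from the hypothesis, so the whole content is to show that each chain map $F_{C,C'}\colon \mathcal{C}(C,C')\rightarrow \mathcal{D}(F(C),F(C'))$ induced by $F$ is a quasi-isomorphism, that is, $H^p(F_{C,C'})$ is bijective for every $p\in\mathbb{Z}$.

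The case $p=0$ is essentially tautological: by the very definition of the homotopy category, $H^0(F_{C,C'})$ is precisely the action of $H^0(F)$ on hom sets, hence bijective by full faithfulness. For arbitrary $p$, I would reduce to $p=0$ using the shift in the canonical triangulated structure on $H^0(\mathcal{C})$ and $H^0(\mathcal{D})$. Concretely, for any object $C'$ of $\mathcal{C}$ and any $p\in\mathbb{Z}$, the shift $C'[p]$ in $H^0(\mathcal{C})$ is characterized by an isomorphism $\mathbf{Y}_\mathcal{C}(C'[p])\cong \mathbf{Y}_\mathcal{C}(C')[p]$ in $H^0({\rm DGMod}\mbox{-}\mathcal{C})$, where the shift on the right is the pointwise shift of complexes. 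Evaluating at $C$ and passing to cohomology yields a natural identification $H^p(\mathcal{C}(C,C'))\cong \mathrm{Hom}_{H^0(\mathcal{C})}(C,C'[p])$, and similarly on the $\mathcal{D}$-side.

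Since $F$ is a dg functor between pretriangulated dg categories, $H^0(F)$ is automatically a triangle functor; in particular there is a canonical isomorphism $F(C'[p])\cong F(C')[p]$ in $H^0(\mathcal{D})$. Chasing the identifications of the previous paragraph, the map $H^p(F_{C,C'})$ becomes the restriction of $H^0(F)$ to $\mathrm{Hom}_{H^0(\mathcal{C})}(C,C'[p])\rightarrow \mathrm{Hom}_{H^0(\mathcal{D})}(F(C),F(C')[p])$, which is bijective by full faithfulness of $H^0(F)$. This completes the verification for every $p$, and hence $F$ is a quasi-equivalence.

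The main obstacle I foresee is purely bookkeeping: one must spell out why the natural identification $H^p(\mathcal{C}(C,C'))\cong \mathrm{Hom}_{H^0(\mathcal{C})}(C,C'[p])$ is compatible with $F$ in the sense that $H^p(F_{C,C'})$ really becomes the triangulated action of $H^0(F)$. Once that naturality is nailed down, the argument reduces to the already-given full faithfulness of $H^0(F)$ together with the standard fact that a dg functor between pretriangulated dg categories induces a triangle functor on homotopy categories.
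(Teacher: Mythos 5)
Your argument is correct and is essentially the proof given in the paper: both identify $H^p(\mathcal{C}(C,C'))$ with $\mathrm{Hom}_{H^0(\mathcal{C})}(C,\Sigma^p(C'))$ via the pretriangulated structure, invoke that $H^0(F)$ is automatically a triangle functor (the paper cites \cite[Remark~1.8(i)]{CS} for exactly the compatibility you flag as the remaining bookkeeping), and conclude from full faithfulness of $H^0(F)$. No substantive difference.
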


\begin{proof}
It suffices to show that the induced chain map  $\mathcal{C}(C, C')\rightarrow \mathcal{D}(F(C), F(C'))$ is a quasi-isomorphism. Recall that $H^i(\mathcal{C}(C, C'))$ is isomorphic to ${\rm Hom}_{H^0(\mathcal{C})}(C, \Sigma^i(C'))$, where $\Sigma$ denotes the translation functor on the triangulated category $H^0(\mathcal{C})$. Similarly, we identify $H^i(\mathcal{D}(F(C), F(C')))$ with ${\rm Hom}_{H^0(\mathcal{D})}(F(C), \Sigma^i(F(C')))$. By assumption,  $H^0(F)$ is a triangle  equivalence between triangulated categories $H^0(\mathcal{C})$ and $H^0(\mathcal{D})$; compare \cite[Remark~1.8(i)]{CS}.  Then $H^0(F)$ induces an isomorphism
$${\rm Hom}_{H^0(\mathcal{C})}(C, \Sigma^i(C'))\longrightarrow {\rm Hom}_{H^0(\mathcal{D})}(F(C), \Sigma^i(F(C'))).$$
We infer the required quasi-isomorphism.
\end{proof}

In the following examples, we fix the notation for some quasi-equivalences, which will be used in the next section.

\begin{exm}\label{exm:YD}
 {\rm Recall that a right dg $\mathcal{D}$-module $M$ is \emph{quasi-representable}, provided that it is isomorphic to $\mathcal{D}(-, D)$ in $\mathcal{D}(\mathcal{D}^{\rm op})$ for some object $D$ in $\mathcal{D}$. Denote by $\bar{\mathcal{D}}$ the full dg subcategory of ${\rm DGProj}\mbox{-}\mathcal{D}$ formed by dg-projective quasi-representable modules. Then the Yoneda embedding induces a quasi-equivalence $\mathbf{Y}_\mathcal{D}\colon \mathcal{D}\rightarrow \bar{\mathcal{D}}$. }
\end{exm}

We identify a dg algebra $B$ with a dg category with one object. We denote by $B\mbox{-DGMod}^{\rm fd}$ the full dg subcategory of $B\mbox{-DGMod}$ consisting of those left dg $B$-modules with finite dimensional total cohomologies. Similarly, we have the dg category $B\mbox{-DGProj}^{\rm fd}$.

The following example is implicitly contained in \cite[7.2]{Kel94}.

\begin{exm}\label{exm:quasi}
{\rm Let $\theta \colon C\rightarrow B$ be a quasi-isomorphism between dg algebras. Then there is a quasi-equivalence
$$B\otimes_C-\colon C\mbox{-{\rm DGProj}} \longrightarrow B\mbox{-{\rm DGProj}}.$$
Using infinite devissage, one infers that the natural map $P\rightarrow B\otimes_C P$ is a quasi-isomorphism for any dg-projective $C$-module $P$. In particular, the above quasi-equivalence restricts to a quasi-equivalence
$$B\otimes_C-\colon C\mbox{-{\rm DGProj}}^{\rm fd} \longrightarrow B\mbox{-{\rm DGProj}}^{\rm fd}.$$
}
\end{exm}

We identify a usual algebra  as a dg algebra  concentrated in degree zero. Then dg modules are just complexes of usual modules.  For a finite dimensional algebra $A$, we denote by $A\mbox{-mod}$ the abelian category of finite dimensional left $A$-modules, and by $A\mbox{-proj}$ its full subcategory formed by finitely generated projective modules.

\begin{exm}\label{exm:res}
{\rm Let $A$ be a finite dimensional algebra. Then $A\mbox{-{\rm DGMod}}$ is identified with $C_{\rm dg}(A\mbox{-Mod})$. The dg-projective resolution functor ${\bf p}_A\colon C_{\rm dg}(A\mbox{-Mod})\rightarrow A\mbox{-{\rm DGProj}}$ restricts to
$${\bf p}_A\colon C^b_{\rm dg}(A\mbox{-mod})\rightarrow A\mbox{-{\rm DGProj}}^{\rm fd}.$$
 Since $\mathbf{p}_A$ sends each acyclic complex $X$ to a contractible complex $\mathbf{p}_A(X)$, it induces a dg functor
$$\mathbf{p}'_A\colon \mathbf{D}^b_{\rm dg}(A\mbox{-mod})\longrightarrow A\mbox{-{\rm DGProj}}^{\rm fd}.$$
For the construction, we put $\mathbf{p}'_A(\varepsilon_X)$ to be any contracting homotopy on $\mathbf{p}_A(X)$, where $\varepsilon_X$ is the new generator in defining $\mathbf{D}^b_{\rm dg}(A\mbox{-mod})$; see Example \ref{exm:dgd}. We observe that $\mathbf{p}'_A$ is a quasi-equivalence.  Indeed, taking $H^0(\mathbf{p}'_A)$, we obtain the well-known triangle equivalence $\mathbf{D}^b(A\mbox{-mod})\simeq H^0(A\mbox{-{\rm DGProj}}^{\rm fd})$.

Denote by $C_{\rm dg}^{-, b}(A\mbox{-proj})$ the dg category formed by bounded-above complexes of finitely generated projective $A$-modules, which have bounded cohomologies. Since bounded-above complexes of projective modules are dg-projective, we have the inclusion
$${\rm inc}_A\colon \colon C_{\rm dg}^{-, b}(A\mbox{-proj}) \longrightarrow A\mbox{-{\rm DGProj}}^{\rm fd}.$$
This is a quasi-equivalence. We just recall that there is a well-know triangle equivalence between the homotopy category $\mathbf{K}^{-, b}(A\mbox{-proj})$ and  $\mathbf{D}^b(A\mbox{-mod})$ .
}\end{exm}

We denote by $\mathbf{dgcat}$ the category of small dg categories, whose morphisms are dg functors.  The \emph{homotopy category} $\mathbf{Hodgcat}$ is the localization of $\mathbf{dgcat}$ with respect to all the quasi-equivalences. In other words, $\mathbf{Hodgcat}$ is obtained from $\mathbf{dgcat}$ by formally inverting quasi-equivalences. For two dg categories $\mathcal{C}$ and $\mathcal{D}$, we denote by $[\mathcal{C}, \mathcal{D}]$ the corresponding  Hom set in $\mathbf{Hodgcat}$, whose elements are usually denoted by  $\mathcal{C}\dashrightarrow \mathcal{D}$. We mention that any such morphism can be realised as a roof $\mathcal{C} \xleftarrow{F} \mathcal{C}'\xrightarrow{F'} \mathcal{D}$ of dg functors, where $F$ is a quasi-equivalence; moreover, $F$ can be taken as a semi-free resolution of $\mathcal{C}$; see \cite[Appendix B.5]{Dri}. For details, we refer to \cite{Toe2}.

For the set-theoretical consideration relevant to us, we use the following remark.

\begin{rem}\label{rem:quasi-sm}
We call a dg category $\mathcal{C}$  \emph{quasi-small}, provided that the homotopy category $H^0(\mathcal{C})$ is essentially small. We choose for each isomorphism class in $H^0(\mathcal{C})$ a representative in $\mathcal{C}$. These objects form a small dg full subcategory $\mathcal{C}'$. By the construction, the inclusion $\mathcal{C}'\hookrightarrow \mathcal{C}$ is a quasi-equivalence. So, we identify $\mathcal{C}$ with $\mathcal{C}'$, and view $\mathcal{C}$ as an object in $\mathbf{Hodgcat}$.
\end{rem}

Denote by $[\mathbf{cat}]$ the category of small categories, whose morphisms are the isomorphism classes of functors. In particular, equivalences of categories are isomorphisms in  $[\mathbf{cat}]$. Therefore, the homotopy functor $H^0\colon \mathbf{dgcat}\rightarrow [\mathbf{cat}]$ inverts quasi-equivalences. By the universal property of the localization, we have the induced functor
$$H^0\colon \mathbf{Hodgcat} \longrightarrow [\mathbf{cat}], \quad \mathcal{C}\mapsto H^0(\mathcal{C}). $$

 Following \cite[7.1]{Kel94},  a \emph{quasi-functor} from $\mathcal{C}$ to $\mathcal{D}$ is a dg $\mathcal{C}$-$\mathcal{D}$-bimodule $X$ such that for each object $C\in \mathcal{C}$, the right $\mathcal{D}$-module $X(-, C)$ is quasi-representable.
 We denote by ${\rm rep}(\mathcal{C}, \mathcal{D})$ the full subcategory of  $\mathbf{D}(\mathcal{C}\otimes \mathcal{D}^{\rm op})$ formed by quasi-functors; it is a triangulated subcategory; see \cite[Appendix E.2]{Dri}. We denote by ${\rm Iso}({\rm rep}(\mathcal{C}, \mathcal{D}))$ the set of isomorphism classes of quasi-functors.

 For each quasi-functor $M$, we take its dg-projective resolution  $\mathbf{p}(M)$. The quasi-functor $\mathbf{p}(M)$ defines a dg functor $\mathbf{p}M\colon \mathcal{C}\rightarrow \bar{\mathcal{D}}$ sending $C$ to $(\mathbf{p}M)(-, C)$. The following diagram
$$\mathcal{C}\xrightarrow{\mathbf{p}M} \bar{\mathcal{D}}\xleftarrow{\mathbf{Y}_\mathcal{D}}  \mathcal{D}$$
defines a morphism $\Phi_M\colon \mathcal{C}\dashrightarrow \mathcal{D}$ in $\mathbf{Hodgcat}$. Here, we recall the quasi-equivalence $\mathbf{Y}_\mathcal{D}$ in Example \ref{exm:YD}.

The following bijection is fundamental; see \cite[Sublemmas 4.4 and 4.5]{Toe1} and \cite[p.279, Corollary 1]{Toe2}. For an elementary proof, we refer to \cite{CS15}. As mentioned in \cite[Remark~6.6]{CS}, the morphism $\Phi_M$ might be viewed as the generalized Fourier-Mukai transform with $M$ being its kernel.

\begin{lem}\label{lem:fund}
Keep the notation as above. Then there is a bijection
$${\rm Iso}({\rm rep}(\mathcal{C}, \mathcal{D})) \longrightarrow [\mathcal{C}, \mathcal{D}], \quad M \mapsto \Phi_M.$$
\end{lem}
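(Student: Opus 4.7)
The plan is to establish the bijection by constructing an explicit inverse map and verifying well-definedness on both sides. The assignment $M \mapsto \Phi_M$ must first be shown to descend to isomorphism classes: given two dg-projective resolutions $\mathbf{p}(M)\to M$ and $\mathbf{p}'(M)\to M$ of a quasi-functor, the standard lifting property for dg-projective resolutions in $(\mathcal{C}\otimes\mathcal{D}^{\rm op})\mbox{-DGProj}$ produces a homotopy equivalence $\mathbf{p}(M)\simeq \mathbf{p}'(M)$; since both take values pointwise in $\bar{\mathcal{D}}$ (because quasi-representability is preserved under quasi-isomorphism of dg-projective right $\mathcal{D}$-modules), the induced dg functors $\mathbf{p}M,\mathbf{p}'M\colon \mathcal{C}\to\bar{\mathcal{D}}$ become equal in $\mathbf{Hodgcat}$. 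The same argument shows $M\cong N$ in $\mathbf{D}(\mathcal{C}\otimes\mathcal{D}^{\rm op})$ implies $\Phi_M=\Phi_N$.

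For surjectivity, I would realise an arbitrary $\alpha\colon \mathcal{C}\dashrightarrow\mathcal{D}$ as a roof $\mathcal{C}\xleftarrow{F}\mathcal{C}'\xrightarrow{F'}\mathcal{D}$ with $F$ a semi-free (hence cofibrant) quasi-equivalence, as guaranteed by \cite[Appendix B.5]{Dri}. From $F'$ form the dg $\mathcal{C}'$-$\mathcal{D}$-bimodule $M_{F'}(D,C')=\mathcal{D}(D,F'(C'))$, which is pointwise representable and therefore a quasi-functor from $\mathcal{C}'$ to $\mathcal{D}$. Since $F\colon \mathcal{C}'\to\mathcal{C}$ is a quasi-equivalence, restriction of scalars along $F\otimes\mathrm{id}_{\mathcal{D}^{\rm op}}$ induces a triangle equivalence $\mathbf{D}(\mathcal{C}\otimes\mathcal{D}^{\rm op})\xrightarrow{\sim}\mathbf{D}(\mathcal{C}'\otimes\mathcal{D}^{\rm op})$ restricting to an equivalence on quasi-functor subcategories. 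Take $M\in{\rm rep}(\mathcal{C},\mathcal{D})$ to be the preimage of $M_{F'}$; then $\mathbf{p}(M)$ restricts, up to quasi-isomorphism, to a dg-projective resolution of $M_{F'}$, from which one checks that the roof defining $\Phi_M$ is equivalent in $\mathbf{Hodgcat}$ to the given roof $(F,\mathbf{Y}_{\mathcal{D}}\circ F')$, hence $\Phi_M=\alpha$.

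For injectivity, suppose $\Phi_M=\Phi_N$. This equality of morphisms $\mathcal{C}\dashrightarrow\bar{\mathcal{D}}$ in $\mathbf{Hodgcat}$ yields a zig-zag of quasi-equivalences relating the dg functors $\mathbf{p}M$ and $\mathbf{p}N$. Passing to the associated bimodules, each link of the zig-zag corresponds to a quasi-isomorphism between pointwise dg-projective bimodules, and composing these isomorphisms in $\mathbf{D}(\mathcal{C}\otimes\mathcal{D}^{\rm op})$ gives $\mathbf{p}(M)\cong\mathbf{p}(N)$, hence $M\cong N$ as quasi-functors.

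The main obstacle is the surjectivity step: constructing the quasi-functor $M$ on $\mathcal{C}\otimes\mathcal{D}^{\rm op}$ corresponding to a roof requires transporting a bimodule across a quasi-equivalence in the $\mathcal{C}$-variable while keeping careful track of dg-projectivity, and then verifying that the roof built from $\mathbf{p}(M)$ agrees in $\mathbf{Hodgcat}$ with the original roof (not merely induces the same map on homotopy categories). This is precisely the point where one genuinely needs the cofibrancy of a semi-free replacement of $\mathcal{C}$ and the model-categorical machinery of \cite{Toe2}, or equivalently the elementary comparison of roofs in \cite{CS15}.
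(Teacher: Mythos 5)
The paper does not prove this lemma at all: it is quoted as a known fundamental result, with the proof deferred to \cite[Sublemmas 4.4 and 4.5]{Toe1}, \cite[p.279, Corollary 1]{Toe2}, and, for an elementary argument, to \cite{CS15}. So there is no in-paper proof to compare against; what has to be judged is whether your sketch would actually constitute a proof. It would not, because at the two decisive points it silently assumes statements that are essentially equivalent to the theorem itself.

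The most serious gap is injectivity. From $\Phi_M=\Phi_N$ you assert that equality of the two morphisms in $\mathbf{Hodgcat}$ ``yields a zig-zag of quasi-equivalences relating the dg functors $\mathbf{p}M$ and $\mathbf{p}N$.'' This is false as stated, or at least completely unjustified: $\mathbf{Hodgcat}$ is a localization that does not admit a naive calculus of fractions, and two parallel dg functors can become equal in $\mathbf{Hodgcat}$ without being connected by any zig-zag of dg functors and quasi-equivalences that you can read off directly. Making this step precise is exactly the content of To\"{e}n's computation of the mapping space ${\rm Map}(\mathcal{C},\mathcal{D})$ in the Dwyer--Kan localization (after a cofibrant replacement of $\mathcal{C}$, equality in $[\mathcal{C},\mathcal{D}]=\pi_0{\rm Map}(\mathcal{C},\mathcal{D})$ is controlled by homotopies with respect to a path object, and one must then show such homotopies induce isomorphisms of the associated bimodules in $\mathbf{D}(\mathcal{C}\otimes\mathcal{D}^{\rm op})$). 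A closely related unproved assertion occurs already in your well-definedness step, where you claim that two dg functors $\mathcal{C}\to\bar{\mathcal{D}}$ related by an objectwise homotopy equivalence of bimodules ``become equal in $\mathbf{Hodgcat}$''; this again requires the homotopy-theoretic input (or the explicit roof comparisons of \cite{CS15}) and is not automatic from the definition of the localization. For surjectivity you do flag the difficulty yourself and defer it to \cite{Toe2} or \cite{CS15}; that is honest, but combined with the above it means the proposal is an outline of the standard strategy rather than a proof --- the hard kernel of the statement is in each case delegated to the very references the paper already cites.
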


The following notion is modified  from \cite[Definition 6.7]{CS}. Here, since the uniqueness of the enhancement is not known, we have to fix the canonical one.

\begin{defn}
Let $F\colon \mathbf{D}^b(\mathcal{A}) \rightarrow \mathbf{D}^b(\mathcal{B})$ be a triangle functor. We say that $F$ is \emph{liftable}, provided that there is a morphism $\tilde{F}\colon \mathbf{D}_{\rm dg}^b(\mathcal{A})\dashrightarrow \mathbf{D}_{\rm dg}^b(\mathcal{B})$ in $\mathbf{Hodgcat}$, called a \emph{dg lift} of $F$,  such that $F$ is isomorphic to ${\rm can}_\mathcal{B}^{-1}H^0(\tilde{F}) {\rm can}_\mathcal{A}$.
\end{defn}

We observe that the composition of liftable functors is still liftable. Using the following well-known lemma, we infer that a quasi-inverse of a liftable equivalence is also liftable. We point out that liftable functors are called standard in \cite[9.8]{Kel05}. However, we reserve the terminology ``standard functors" for the classical ones, that is, derived tensor functors by complexes.

\begin{lem}
Let $F\colon \mathbf{D}^b(\mathcal{A}) \rightarrow \mathbf{D}^b(\mathcal{B})$ be a triangle equivalence. Then any dg lift  $\tilde{F}$ of $F$ is an isomorphism in $\mathbf{Hodgcat}$.
\end{lem}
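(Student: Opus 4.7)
The plan is to represent $\tilde F$ by a roof and show that both legs are quasi-equivalences; then $\tilde F$ becomes a composition of two isomorphisms in $\mathbf{Hodgcat}$.

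First, I write $\tilde F$ as a roof
\[
\mathbf{D}_{\rm dg}^b(\mathcal A) \xleftarrow{G} \mathcal E \xrightarrow{G'} \mathbf{D}_{\rm dg}^b(\mathcal B),
\]
where $G$ is a quasi-equivalence, as is permitted in the description of $\mathbf{Hodgcat}$ recalled above. Applying the induced functor $H^0\colon \mathbf{Hodgcat}\to [\mathbf{cat}]$ to the hypothesis $F \cong {\rm can}_\mathcal B^{-1}\circ H^0(\tilde F)\circ {\rm can}_\mathcal A$, and using that $F$ is an equivalence while the canonical enhancements are isomorphisms in $[\mathbf{cat}]$, I conclude that $H^0(\tilde F)$ is an equivalence. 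From $H^0(\tilde F)= H^0(G')\circ H^0(G)^{-1}$ in $[\mathbf{cat}]$ and the fact that $H^0(G)$ is an equivalence (because $G$ is a quasi-equivalence), it follows that $H^0(G')$ is an equivalence as well.

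Second, I show that $G'$ is itself a quasi-equivalence. Density of $H^0(G')$ is immediate. For each pair $X, Y\in \mathcal E$, I must verify that the induced chain map
\[
G'_{X,Y}\colon \mathcal{E}(X, Y) \longrightarrow \mathbf{D}_{\rm dg}^b(\mathcal B)(G'X, G'Y)
\]
is a quasi-isomorphism. Taking cohomology, the quasi-equivalence $G$ together with the pretriangulated structure on $\mathbf{D}_{\rm dg}^b(\mathcal A)$ identifies
\[
H^i(\mathcal{E}(X, Y)) \;\cong\; H^i\bigl(\mathbf{D}_{\rm dg}^b(\mathcal A)(GX, GY)\bigr) \;\cong\; {\rm Hom}_{\mathbf{D}^b(\mathcal A)}\bigl(GX, \Sigma^i(GY)\bigr),
\]
while $H^i$ of the target is ${\rm Hom}_{\mathbf{D}^b(\mathcal B)}(G'X, \Sigma^i(G'Y))$. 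Under these identifications, combined with $H^0(\tilde F)(GZ)\cong G'Z$ for each object $Z$ and $H^0(\tilde F)\circ \Sigma\cong \Sigma\circ H^0(\tilde F)$ (since $H^0(\tilde F)$ is a triangle functor), the cohomology map induced by $G'_{X,Y}$ corresponds to the bijection
\[
{\rm Hom}_{\mathbf{D}^b(\mathcal A)}\bigl(GX, \Sigma^i(GY)\bigr)\longrightarrow {\rm Hom}_{\mathbf{D}^b(\mathcal B)}\bigl(G'X, \Sigma^i(G'Y)\bigr)
\]
given by the triangle equivalence $H^0(\tilde F)$. Hence $G'_{X,Y}$ is a quasi-isomorphism. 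Consequently $G'$ is a quasi-equivalence, and $\tilde F = G'\circ G^{-1}$ is an isomorphism in $\mathbf{Hodgcat}$.

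The main obstacle I anticipate is the compatibility check in the second step: one must verify that the natural isomorphism $H^i(\mathcal C(X, Y))\cong {\rm Hom}_{H^0(\mathcal C)}(X, \Sigma^i(Y))$ for a pretriangulated dg category $\mathcal C$ is natural with respect to dg functors, so that the cohomology map induced by $G'_{X,Y}$ really does match $H^0(\tilde F)$ on $\mathrm{Hom}$-groups under the quoted identifications. This is somewhat delicate but is a routine naturality argument. A tempting shortcut would be to apply Lemma \ref{lem:H0} directly to $G'$, but that would require $\mathcal E$ to be pretriangulated, a property not a priori guaranteed by the roof representation.
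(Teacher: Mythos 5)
Your first step (representing $\tilde F$ by a roof with $G$ a quasi-equivalence and deducing that $H^0(G')$ is an equivalence) is exactly the paper's argument. The divergence comes at the second step, and it is motivated by a misconception: pretriangulatedness, as defined here via the \emph{essential image} of the Yoneda embedding $H^0(\mathbf{Y}_{\mathcal E})$ being a triangulated subcategory, is invariant under quasi-equivalence. Since $G\colon \mathcal E\to \mathbf{D}^b_{\rm dg}(\mathcal A)$ is a quasi-equivalence onto a pretriangulated dg category, $\mathcal E$ \emph{is} pretriangulated, and one may apply Lemma \ref{lem:H0} to $G'$ directly. That is precisely what the paper does, in two lines. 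The ``shortcut'' you dismiss is the intended proof.

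The replacement argument you give instead has a genuine gap at its key step. To show that $H^i(G'_{X,Y})$ is bijective for $i\neq 0$, you transport everything to ${\rm Hom}_{\mathbf{D}^b(\mathcal A)}(GX,\Sigma^i(GY))$ and invoke the bijection induced by $H^0(\tilde F)$, using that $H^0(\tilde F)\circ\Sigma\cong\Sigma\circ H^0(\tilde F)$ ``since $H^0(\tilde F)$ is a triangle functor.'' But in your setup $H^0(\tilde F)=H^0(G')\circ H^0(G)^{-1}$ is a priori only an equivalence of categories; the fact that it carries a triangle-functor structure (in particular commutes with $\Sigma$) is itself a nontrivial statement about morphisms in $\mathbf{Hodgcat}$, and the standard way to prove it is to observe that the middle term $\mathcal E$ of the roof is pretriangulated, so that $H^0(G)$ and $H^0(G')$ are themselves triangle functors --- i.e., the very fact you set out to avoid. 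The same applies to the ``routine naturality'' of the identification $H^i(\mathcal C(X,Y))\cong{\rm Hom}_{H^0(\mathcal C)}(X,\Sigma^i(Y))$ across the roof: a closed degree-$i$ morphism in $\mathcal E$ cannot even be interpreted as a morphism $X\to\Sigma^i(Y)$ in $H^0(\mathcal E)$ without a shift functor on $\mathcal E$. So as written the compatibility check is not routine; it is the whole content of the lemma, and your argument defers it rather than proving it. Once you grant that $\mathcal E$ is pretriangulated, both difficulties disappear and your proof collapses to the paper's.
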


\begin{proof}
We use the roof presentation  $\mathbf{D}_{\rm dg}^b(\mathcal{A})\xleftarrow{F_1} \mathcal{C}\xrightarrow{F_2} \mathbf{D}_{\rm dg}^b(\mathcal{B})$ of $\tilde{F}$, where $F_1$ is a quasi-equivalence. It follows that the dg category $\mathcal{C}$  is also pretriangulated. By assumption, we infer that $H^0(F_2)$ is an equivalence. By Lemma \ref{lem:H0}, the dg functor $F_2$ is a quasi-equivalence, which implies that $\tilde{F}$ is an isomorphism.
\end{proof}

\section{Liftable and standard functors}

In this section, we prove that the  category of quasi-functors between the bounded dg derived categories of  two module categories is triangle equivalent to a certain derived category of bimodules over the given algebras. Consequently, between the bounded derived categories of two module categories, liftable functors coincide with standard functors.

Let $A$ and $B$ be two finite dimensional algebras. Recall from \cite[Definition~3.4]{Ric} that a triangle functor $F\colon \mathbf{D}^b(A\mbox{-}{\rm mod}) \rightarrow \mathbf{D}^b(B\mbox{-}{\rm mod})$ is \emph{standard}, provided that $F\simeq X\otimes^\mathbb{L}_A-$ for some bounded complex $X$ of $B$-$A$-bimodules. Since $X\otimes^\mathbb{L}_A-$ sends bounded complexes to bounded complexes, we infer that the underlying complex $X_A$ of right $A$-modules is \emph{perfect}, that is, isomorphic to some object in $\mathbf{K}^b(A^{\rm op}\mbox{-proj})$.

We will identify $\mathbf{D}^b(A\mbox{-}{\rm mod})$ with $\mathbf{K}^{-, b}(A\mbox{-proj})$; compare Example \ref{exm:res}. For each complex $P\in \mathbf{K}^{-, b}(A\mbox{-}{\rm proj})$ and $N\geq 0$, we consider the brutal truncation $\sigma_{\geq -N}(P)$, which is a subcomplex of $P$ consisting of $P^n$ for $n\geq -N$. The inclusion ${\rm inc}_N\colon \sigma_{\geq -N}(P)\rightarrow P$ fits into a canonical exact triangle
\begin{align}\label{equ:tri1}
\sigma_{\geq -N}(P) \xrightarrow{{\rm inc}_N}  P \longrightarrow  \sigma_{<-N}(P) \longrightarrow \Sigma \sigma_{\geq -N}(P),
\end{align}
where $\sigma_{<-N} P=P/{\sigma_{\geq -N}(P) }$ is the quotient complex.

The following result is standard.

\begin{lem}\label{lem:H0iso}
Let $F \colon \mathbf{K}^{-, b}(A\mbox{-}{\rm proj})\rightarrow \mathbf{K}^{-, b}(B\mbox{-}{\rm proj})$ be a triangle functor. Then there is a natural number $N_0$ such that $H^0(F({\rm inc}_{N}))$ is an isomorphism for each complex $P$ and $N\geq N_0$.
\end{lem}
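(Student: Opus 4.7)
The plan is to exploit finite-dimensionality of $A$ to produce a uniform cohomological amplitude bound on $F$, and then apply it to the quotient complex $\sigma_{<-N}(P)$. Let $S_1,\ldots,S_n$ be the finitely many simple $A$-modules. Each $F(S_i)$ lies in $\mathbf{D}^b(B\mbox{-mod})$ and so has bounded cohomology; choose integers $a\le b$ such that the cohomology of every $F(S_i)$ is concentrated in $[a,b]$. I shall take $N_0=\max(0,-a)$.

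The first step is an induction on composition length showing that $F(M)$ has cohomology supported in $[a,b]$ for every $M\in A\mbox{-mod}$. A short exact sequence $0\to M'\to M\to S\to 0$ with $S$ simple yields a triangle $F(M')\to F(M)\to F(S)\to$ in $\mathbf{D}^b(B\mbox{-mod})$, and the associated long exact cohomology sequence forces $H^i(F(M))=0$ for $i\notin[a,b]$ once this holds for $F(M')$ and $F(S)$.

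The second step is an induction on cohomological amplitude, via the canonical truncation triangle $\tau_{\le d-1}(Q)\to Q\to H^d(Q)[-d]\to$, showing that for every $Q\in\mathbf{D}^b(A\mbox{-mod})$ with cohomology concentrated in $[c,d]$ the complex $F(Q)$ has cohomology concentrated in $[a-d,\,b-c]$. The base case of a single nonzero cohomology follows from the first step together with $F(M[-d])\simeq F(M)[-d]$; the inductive step combines the intervals $[a-d+1,\,b-c]$ and $[a-d,\,b-d]$ arising from the two outer terms via the long exact cohomology sequence.

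Finally, fix $P\in\mathbf{K}^{-,b}(A\mbox{-proj})$ with cohomological amplitude $[c,d]$ and take any $N\ge N_0$. The quotient $\sigma_{<-N}(P)$ is a bounded-above complex of projectives with cohomology contained in $[c,\min(d,-N-1)]$; when $-N-1<c$ this interval is empty, so $\sigma_{<-N}(P)$ is acyclic and hence contractible in $\mathbf{K}^{-,b}(A\mbox{-proj})$. Otherwise the second step gives $F(\sigma_{<-N}(P))$ cohomology concentrated in degrees $\ge a+N+1\ge 1$, so both $H^{-1}$ and $H^0$ of $F(\sigma_{<-N}(P))$ vanish. In either case the long exact cohomology sequence obtained by applying $F$ to the triangle \eqref{equ:tri1} forces $H^0(F({\rm inc}_N))$ to be an isomorphism. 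The main subtlety, and the entire point of the lemma, is the uniformity of $N_0$ in $P$, which is delivered by the finiteness of the set of simple $A$-modules; without that finiteness one would only obtain a trivial $P$-dependent bound coming from $\sigma_{<-N}(P)$ eventually being acyclic.
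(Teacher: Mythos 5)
Your overall strategy is the same as the paper's: use the finiteness of the set of simple $A$-modules to obtain a uniform cohomological amplitude bound for $F$, propagate it by d\'evissage to all objects with bounded cohomology, and apply it to the tail $\sigma_{<-N}(P)$ in the long exact sequence coming from (\ref{equ:tri1}). However, the amplitude bookkeeping in your second step is reversed, and this makes your stated $N_0$ wrong. If $Q$ has cohomology concentrated in degree $d$, then $Q\simeq H^d(Q)[-d]$ and $F(Q)\simeq F(H^d(Q))[-d]$ has cohomology in $[a+d,b+d]$, not in $[a-d,b-d]$ (the same shift that moves the cohomology of $H^d(Q)$ from degree $0$ to degree $d$ cannot subtract $d$ when applied to $F(H^d(Q))$). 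The correct general statement is that cohomology in $[c,d]$ is sent to cohomology in $[a+c,b+d]$. Consequently the degree that controls $F(\sigma_{<-N}(P))$ near $0$ is the \emph{upper} end of the amplitude: since $\sigma_{<-N}(P)$ has cohomology in degrees $\le -N-1$, the complex $F(\sigma_{<-N}(P))$ has cohomology in degrees $\le b-N-1$, and one must take $N_0\ge b+1$ rather than $N_0=\max(0,-a)$. Your choice already fails for $F={\rm Id}$: there $a=b=0$, so your $N_0=0$, yet for $P$ a projective resolution of a non-projective module the map $H^0({\rm inc}_0)$ is the quotient map $P^0\rightarrow P^0/{\operatorname{im}\,d_P^{-1}}$, which is not injective.

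A second, related slip: the cohomology of the brutal truncation $\sigma_{<-N}(P)$ is \emph{not} contained in $[c,\min(d,-N-1)]$. Its top cohomology $H^{-N-1}(\sigma_{<-N}(P))=\operatorname{coker}(d_P^{-N-2})$ is a syzygy-type module which is typically nonzero even when $H^i(P)=0$ for all $i\le -N-1$; so in the case $-N-1<c$ the complex $\sigma_{<-N}(P)$ need not be acyclic (take $P$ a projective resolution of a module of infinite projective dimension). What is true, and is all that is needed once the shift formula is corrected, is that the cohomology of $\sigma_{<-N}(P)$ is concentrated in degrees $\le -N-1$. With these two corrections your argument coincides with the proof in the paper, which phrases the d\'evissage as the inclusion $F(\mathcal{D}_A^{[a,b]})\subseteq \mathcal{D}_B^{[a-N_0+1,\,b+N_0-1]}$ and concludes $F(\sigma_{<-N}(P))\in\mathcal{D}_B^{(-\infty,-2]}$.
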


\begin{proof}
For each interval $I$, we denote by $\mathcal{D}_A^I$ the full subcategory of $\mathbf{K}^{-, b}(A\mbox{-}{\rm proj})$ formed by those complexes $X$ satisfying $H^i(X)=0$ for $i\notin I$. Similarly, we have the subcategories $\mathcal{D}^I_B$ of $\mathbf{K}^{-, b}(B\mbox{-}{\rm proj})$ . These subcategories are closed under extensions.

The subcategory $\mathcal{D}_A^{[0, 0]}$ is equivalent to $A\mbox{-mod}$, which has only finitely many simple $A$-modules up to isomorphism. It follows that $F(\mathcal{D}_A^{[0, 0]})\subseteq \mathcal{D}_B^{[-N_0+1, N_0-1]}$ for $N_0>0$ large enough. More generally, we have $F(\mathcal{D}_A^{[a, b]})\subseteq \mathcal{D}_B^{[a-N_0+1, b+N_0-1]}$. It follows that $F(\sigma_{<-N}(P)) \in \mathcal{D}_B^{(-\infty, -2]}$ for each $N\geq N_0$. Applying the triangle functor $F$ to  (\ref{equ:tri1}) and taking cohomologies, we infer the required isomorphism.
\end{proof}

We denote by $\mathbf{D}(B\otimes A^{\rm op})$ the derived category of complexes of  $B$-$A$-bimodules.

\begin{thm}\label{thm:lift}
There is a triangle equivalence
$${\rm rep}(\mathbf{D}_{\rm dg}^b(A\mbox{-{\rm mod}}), \mathbf{D}_{\rm dg}^b(B\mbox{-{\rm mod}}))\stackrel{\sim}\longrightarrow \{M\in \mathbf{D}(B\otimes A^{\rm op})\; |\; M_A \mbox{ is perfect}\},$$
sending a dg $\mathbf{D}_{\rm dg}^b(A\mbox{-{\rm mod}})$-$\mathbf{D}_{\rm dg}^b(B\mbox{-{\rm mod}})$-bimodule $X$ to $X(B, A)$.

 Consequently, a triangle functor $F\colon \mathbf{D}^b(A\mbox{-}{\rm mod}) \rightarrow \mathbf{D}^b(B\mbox{-}{\rm mod})$ is liftable if and only if it is standard.
\end{thm}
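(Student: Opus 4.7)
The plan is to reduce the theorem to a dg-Morita computation by passing through the quasi-equivalences of Example \ref{exm:res}. Combining $\mathbf{p}'_A$ and ${\rm inc}_A$ gives, in $\mathbf{Hodgcat}$, isomorphisms
$$\mathbf{D}_{\rm dg}^b(A\mbox{-mod}) \simeq A\mbox{-DGProj}^{\rm fd} \simeq C^{-,b}_{\rm dg}(A\mbox{-proj}),$$
and similarly for $B$. Since the set of isomorphism classes of quasi-functors depends by Lemma \ref{lem:fund} only on the $\mathbf{Hodgcat}$-classes of the two dg categories, I may replace the bounded dg derived categories by $\mathcal{C}:=C^{-,b}_{\rm dg}(A\mbox{-proj})$ and $\mathcal{D}:=C^{-,b}_{\rm dg}(B\mbox{-proj})$. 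The advantage is that $A$ and $B$ now appear as genuine stalk complexes whose dg endomorphism algebras are literally $A$ and $B$ concentrated in degree zero.

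For the evaluation functor $\Phi\colon {\rm rep}(\mathcal{C}, \mathcal{D}) \to \mathbf{D}(B \otimes A^{\rm op})$ sending $X$ to $X(B, A)$, the actions of $\mathcal{C}(A, A) = A$ and $\mathcal{D}(B, B) = B$ automatically turn $X(B, A)$ into a complex of $B$-$A$-bimodules, while quasi-representability $X(-, A)\simeq \mathcal{D}(-, D_A)$ identifies $X(B, A)$, as a complex of $B$-modules, with $D_A \in \mathbf{D}^b(B\mbox{-mod})$. Perfectness of $X(B, A)_A$ then follows by applying Lemma \ref{lem:H0iso} to the induced triangle functor $H^0(X)$: the boundedness estimate given by that lemma translates, after tensoring against $X(B, A)$, into the statement that $X(B, A)\otimes^{\mathbb{L}}_A-$ preserves boundedness, which forces $X(B, A)_A$ to be perfect.

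The crux is that $\Phi$ is a triangle equivalence. Here I would invoke dg-Morita theory in the spirit of \cite[Theorem 7.2]{Kel94}: since $A$ (respectively $B$) is a compact generator of $H^0(\mathcal{C}) = \mathbf{D}^b(A\mbox{-mod})$ (respectively $H^0(\mathcal{D})$) whose dg endomorphism algebra is the algebra itself, restriction along the inclusion of the full dg subcategory on the single object $\{A\}$ (respectively $\{B\}$) induces a triangle equivalence $\mathbf{D}(\mathcal{C}\otimes \mathcal{D}^{\rm op})\stackrel{\sim}{\longrightarrow}\mathbf{D}(B\otimes A^{\rm op})$ that agrees with $\Phi$ on objects. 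Under this equivalence, the subcategory ${\rm rep}(\mathcal{C}, \mathcal{D})$ --- characterised by quasi-representability of $X(-, P)$ for every $P\in \mathcal{C}$ --- matches exactly the bimodules $M$ with $M_A$ perfect: necessity was just established, and conversely, given such an $M$, one chooses a bounded resolution $\tilde M \to M$ by bimodules projective as right $A$-modules in each degree, so that $\tilde M \otimes_A -\colon \mathcal{C}\to \mathcal{D}$ is a strict dg functor whose graph is a quasi-functor recovering $M$.

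The consequence for the second assertion is then immediate: by definition and Lemma \ref{lem:fund}, liftable triangle functors correspond to isomorphism classes of quasi-functors, which by the main equivalence correspond to bimodules $M$ with $M_A$ perfect, and the triangle functor attached to such an $M$ is precisely the standard one $M\otimes^{\mathbb{L}}_A-$. The main obstacle I anticipate is the dg-Morita step --- verifying that restriction onto the single-object dg subcategory matches quasi-representability with right-perfectness --- but this is essentially a Keller-style reduction, based on the fact that $A$ generates $H^0(\mathcal{C})$ under triangles and direct summands together with the identification $\mathcal{C}(A, A) \simeq A$.
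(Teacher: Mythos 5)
Your reduction to $\mathcal{C}=C^{-,b}_{\rm dg}(A\mbox{-proj})$, $\mathcal{D}=C^{-,b}_{\rm dg}(B\mbox{-proj})$ and the evaluation functor $X\mapsto X(B,A)$ agree with the paper. The gap is the dg-Morita step. For restriction along the single-object inclusion $\{A\}\hookrightarrow\mathcal{C}$ to induce an equivalence $\mathbf{D}(\mathcal{C})\to\mathbf{D}(\mathcal{C}(A,A))$ (and likewise for bimodules), the representable module at $A$ must be a \emph{compact generator} of $\mathbf{D}(\mathcal{C})$, not merely an object whose right orthogonal in $H^0(\mathcal{C})$ vanishes. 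When $A$ has infinite global dimension this fails. Concretely, take $A=k[\epsilon]$ and let $P\in\mathcal{C}$ be a projective resolution of the simple module $k$. The representable $\mathcal{C}(P,-)$ is compact in $\mathbf{D}(\mathcal{C})$, but its restriction to $\{A\}$ is $\mathcal{C}(P,A)={\rm Hom}_A(P,A)\simeq k$, which is not a perfect complex and hence not compact in $\mathbf{D}(A)$. Since a triangle equivalence preserves compactness, the restriction functor $\mathbf{D}(\mathcal{C}\otimes\mathcal{D}^{\rm op})\to\mathbf{D}(B\otimes A^{\rm op})$ is not an equivalence; it is only a Bousfield localization, and in particular it is not faithful (it annihilates the nonzero right orthogonal of the representable at $(A,B)$). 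So one cannot ``match the subcategories under this equivalence'': a priori two non-isomorphic quasi-functors could have isomorphic values at $(B,A)$.

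The actual content of the theorem is that evaluation \emph{becomes} fully faithful once restricted to ${\rm rep}(\mathcal{C},\mathcal{D})$, and this is exactly where quasi-representability must be used. The paper does this by showing that every quasi-functor $X$ is isomorphic, in $\mathbf{D}(\mathcal{C}\otimes\mathcal{D}^{\rm op})$, to the bimodule induced from $M=X(B,A)$, via the roof of quasi-isomorphisms built from $\theta_P\colon X(B,A)\otimes_AP\to X(B,P)$ and $\delta\colon X(Q,P)\to{\rm Hom}_B(Q,X(B,P))$ in (\ref{equ:iso1}) and (\ref{equ:iso2}). Proving that $\theta_P$ is a quasi-isomorphism for unbounded-below $P$ requires that $X(B,P)\simeq F(P)$ has bounded cohomology (quasi-representability) together with the uniform bound of Lemma \ref{lem:H0iso} and the brutal truncation triangle (\ref{equ:tri1}). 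Note also that your perfectness argument already presupposes the identification $X(B,A)\otimes_A^{\mathbb{L}}P\simeq X(B,P)$, i.e.\ the quasi-isomorphism $\theta_P$, which is not automatic and is the main technical point. Your converse direction (from a right-perfect $M$ to a quasi-functor via a resolution projective as right $A$-modules) is fine and matches the paper.
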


\begin{proof}
We use the sequence of quasi-equivalences in Example \ref{exm:res}
$$\mathbf{D}^b_{\rm dg}(A\mbox{-mod})\xrightarrow{\mathbf{p}'_A} A\mbox{-{\rm DGProj}}^{\rm fd} \xleftarrow{{\rm inc}_A} C_{\rm dg}^{-, b}(A\mbox{-proj}). $$
In this proof, we identify $\mathbf{D}^b_{\rm dg}(A\mbox{-mod})$ with $\mathcal{C}:=C_{\rm dg}^{-, b}(A\mbox{-proj})$, $\mathbf{D}^b_{\rm dg}(B\mbox{-mod})$ with $\mathcal{D}:=C_{\rm dg}^{-, b}(B\mbox{-proj})$.

We will actually prove that there is a triangle equivalence
$${\rm rep}(\mathcal{C}, \mathcal{D})\stackrel{\sim}\longrightarrow \{M\in \mathbf{D}(B\otimes A^{\rm op})\; |\; M_A \mbox{ is perfect}\},$$
sending $X$ to $X(B, A)$, whose quasi-inverse sends $M$ to the dg $\mathcal{C}$-$\mathcal{D}$-bimodule $X_M$ defined by $X_M(Q, P)={\rm Hom}_B(Q, M\otimes_A P)$ for $P\in \mathcal{C}$ and $Q\in \mathcal{D}$.

Take $X\in {\rm rep}(\mathcal{C}, \mathcal{D})$ and fix an isomorphism
$$\xi_{-, P}\colon X(-,P)\stackrel{\sim}\longrightarrow \mathcal{D}(-, F(P))$$
in $\mathbf{D}(\mathcal{D}^{\rm op})$ for each $P\in \mathcal{C}$. Therefore, the dg bimodule $X$ induces a triangle functor
$$F\colon \mathbf{K}^{-, b}(A\mbox{-proj})\longrightarrow \mathbf{K}^{-, b}(B\mbox{-proj}).$$
 In particular, $X(B, P)$ is isomorphic to $F(P)$ in $\mathbf{D}(B)$, which has bounded cohomologies. We claim that the following natural map
\begin{align}\label{equ:iso1}
\theta_P\colon X(B, A)\otimes_A P\longrightarrow X(B, P), \quad m\otimes p\mapsto (-1)^{|m|\cdot |p|}p.m
\end{align}
is a quasi-isomorphism. Here, we view $p\in P$ as an element in $\mathcal{C}(A, P)$, and then $p.m$ denotes the left $\mathcal{C}$-action on $X$. By the claim, $X(B, A)\otimes_A P$ has bounded cohomologies for each  $P\in \mathcal{C}$. Therefore, the underlying complex  $X(B, A)_A$ of right $A$-modules is perfect.

We observe that $\theta_P$ is an isomorphism in the case that $P\simeq \Sigma^i(A)$. It follows that $\theta_P$ is an isomorphism for any bounded complex $P$ in $\mathcal{C}$. In general, we will show that $H^i(\theta_P)$ is an isomorphism. By translation, we will only show that $H^0(\theta_P)$ is an isomorphism. We consider the brutal truncation $\sigma_{\geq -N}(P)$, which is a bounded subcomplex of $P$. The inclusion ${\rm inc}_N\colon \sigma_{\geq -N}(P)\rightarrow P$ induces the vertical maps in  the following commutative diagram.
\[
\xymatrix{
X(B, A)\otimes_A \sigma_{\geq -N}(P) \ar[d] \ar[rr]^-{\theta_{\sigma_{\geq -N}(P)}} && X(B, \sigma_{\geq -N}(P)) \ar[d] \ar[rr]^{\sim} && F(\sigma_{\geq -N}(P))\ar[d]\\
X(B, A)\otimes_A P \ar[rr]^-{\theta_P} && X(B, P) \ar[rr]^{\sim} && F(P)
}\]
Since $X(B, A)$ has bounded cohomologies, the leftmost vertical map induces an isomorphism on $H^0$ for sufficiently large $N$. By Lemma \ref{lem:H0iso}, a similar remark holds for the rightmost one. Then the claim follows from the isomorphism $\theta_{{\sigma_{\geq -N}(P)}}$.

For each $Q\in \mathcal{D}$,  we claim that the following natural map
\begin{align}\label{equ:iso2}
\delta\colon X(Q, P)\longrightarrow {\rm Hom}_B(Q, X(B, P)), \quad x\mapsto (q\mapsto x.q)
\end{align}
 is a quasi-isomorphism. Here, $q\in Q$ is viewed as an element in $\mathcal{D}(B, Q)$, and $x.q$ denotes the right $\mathcal{D}$-action on $X$.

 To see the claim, we use the isomorphism $\xi_{B, P}\colon X(B, P)\rightarrow F(P)$ in $\mathbf{D}(B)$. Then we have the following quasi-isomorphisms of complexes
   $${\rm Hom}_B(Q, X(B, P))\simeq {\rm Hom}_B(Q, F(P))=\mathcal{D}(Q, F(P))\simeq X(Q, P).$$
 Then the  claim follows, since $\delta$ is compatible with the above quasi-isomorphisms.

 Combining the quasi-isomorphisms (\ref{equ:iso1}) and (\ref{equ:iso2}), we obtain a roof of quasi-isomorphisms
 \begin{align}\label{equ:iso3}
 X(Q, P)\xrightarrow{\delta} {\rm Hom}_B(Q, X(B, P)) \xleftarrow{{\rm Hom}_B(Q, \theta_P)} {\rm Hom}_B(Q, X(B, A)\otimes_A P).
 \end{align}
 This gives rise to an isomorphism of dg $\mathcal{C}$-$\mathcal{D}$-bimodules in $\mathbf{D}(\mathcal{C}\otimes \mathcal{D}^{\rm op})$. Then the required mutually inverse equivalences follows immediately.

 It remains to prove the consequence. The ``if" part is well known. Assume that $F\simeq M\otimes^\mathbb{L}_A-$ for a bounded complex $M$ of $B$-$A$-bimodules with $M_A$  perfect. By using projective resolutions of $B$-$A$-bimodules and truncations, we may assume that $M_A$ lies in $\mathbf{K}^b(A^{\rm op}\mbox{-proj})$. Then the dg functor
 $$M\otimes_A-\colon \mathbf{D}^b_{\rm dg}(A\mbox{-mod})\longrightarrow \mathbf{D}^b_{\rm dg}(B\mbox{-mod})$$
 is well-defined, which is a dg lift of $F$.

 For the ``only if" part, we assume that $F$ admits a dg lift $\tilde{F}\colon \mathcal{C}\dashrightarrow \mathcal{D}$. By Lemma~\ref{lem:fund}, we may assume that $\tilde{F}=\Phi_X$ for some $X\in {\rm rep}(\mathcal{C}, \mathcal{D})$. We identify $X$ with its dg-projective resolution $\mathbf{p}(X)$. By definition, $H^0(\Phi_X)(P)$ is representing the right dg $\mathcal{D}$-module $X(-, P)$. By (\ref{equ:iso3}), we infer that $H^0(\Phi_X)(P)$ is isomorphic to $X(B, A)\otimes_AP$. More precisely, we might replace $X(B, A)$ by a bounded-above complex $M$ of finitely generated projective $B$-$A$-bmodules. Then $H^0(\Phi_X)(P)$  is isomorphic to $M\otimes_A P$. Consequently, $F$ is identified with
 $$M\otimes_A- \colon \mathbf{K}^{-, b}(A\mbox{-proj})\longrightarrow \mathbf{K}^{-, b}(B\mbox{-proj}).$$
 This proves that $F$ is standard.
 \end{proof}

\section{A factorization theorem for derived equivalences}

In this section, we prove a factorization theorem for derived equivalences: any derived equivalence between a module category and an abelian category is a composition of a pseudo-identity  with a liftable derived equivalence.

The following notions are taken from \cite[Definitions 3.8 and 5.1]{CY}; compare \cite[Lemma 5.2]{CY}. For an abelian category $\mathcal{B}$, we identify $\mathcal{B}$ as the full subcategory of $\mathbf{D}^b(\mathcal{B})$ formed by stalk complexes concentrated in degree zero. More generally, we denote by $\Sigma^{n}(\mathcal{B})$ the full subcategory formed by stalk complexes concentrated in degree $-n$.

\begin{defn}
We call a triangle functor $F\colon \mathbf{D}^b(\mathcal{B})\rightarrow \mathbf{D}^b(\mathcal{B})$ a \emph{pseudo-identity}, provided that $F(X)=X$ for each complex $X$, and that for each integer $n$,  the restriction $F|_{\Sigma^{n}(\mathcal{B})}\colon \Sigma^n(\mathcal{B})\rightarrow \Sigma^n(\mathcal{B})$ is the identity functor.

The abelian category $\mathcal{B}$ is called \emph{$\mathbf{D}$-standard}, provided that any pseudo-identity on $\mathbf{D}^b(\mathcal{B})$ is isomorphic, as triangle functors,  to the identity functor.
\end{defn}

We observe that a pseudo-identity is necessarily an autoequivalence; see \cite[Lemma 3.6]{CY}. The main motivation of introducing $\mathbf{D}$-standard categories is the following result: the module category $A\mbox{-mod}$ for a finite dimensional algebra $A$ is $\mathbf{D}$-standard if and only if any derived equivalence $F\colon \mathbf{D}^b(A\mbox{-mod})\rightarrow \mathbf{D}^b(B\mbox{-mod})$ is standard; see \cite[Theorem 5.10]{CY}. Therefore, the well-known open question \cite{Ric} about standard derived equivalences is equivalent to the conjecture that any module category $A\mbox{-mod}$ is $\mathbf{D}$-standard. On the other hand, there exists a triangle functor between the bounded derived categories of module categories, which is neither an equivalence nor standard; see \cite[Corollary 1.5]{RVan}.

In what follows, $A$ will be a finite dimensional algebra and $\mathcal{A}$ an abelian category.

\begin{lem}\label{lem:pseu}
Let $F\colon \mathbf{D}^b(A\mbox{-}{\rm mod})\rightarrow \mathbf{D}^b(A\mbox{-}{\rm mod})$ be a triangle functor. Assume that there is an isomorphism $\theta \colon F(A)\rightarrow A$ such that $\theta \circ F(a)=a\circ \theta$ for each morphism $a\colon A\rightarrow A$. Then $F$ is isomorphic to a pseudo-identity.
\end{lem}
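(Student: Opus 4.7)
The plan is to produce a pseudo-identity $G$ on $\mathbf{D}^b(A\mbox{-mod})$ together with a natural isomorphism $F\cong G$ of triangle functors. The construction proceeds in three stages, working outwards from $A$ to $A\mbox{-proj}$ to the whole derived category.

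\textbf{Stage 1 (extending $\theta$ to $A\mbox{-proj}$).} Since $F$ is $k$-linear, it preserves finite biproducts, so the isomorphism $\theta\colon F(A)\rightarrow A$ induces $\theta^{\oplus n}\colon F(A^n)\rightarrow A^n$. The assumption $\theta\circ F(a)=a\circ\theta$ for $a\in A={\rm End}_A(A)$ extends by additivity to $\theta^{\oplus n}\circ F(\alpha)=\alpha\circ\theta^{\oplus n}$ for every $\alpha\in {\rm End}_A(A^n)=M_n(A)$. Applying this to the idempotents $e\in M_n(A)$ that cut out finitely generated projectives as summands of $A^n$, the map $\theta^{\oplus n}$ restricts to an isomorphism $\theta_P\colon F(P)\rightarrow P$ for each $P\in A\mbox{-proj}$, and naturality in $P$ is a direct consequence of the extended compatibility. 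Thus $\theta$ promotes to a natural isomorphism $F|_{A\mbox{-proj}}\Rightarrow {\rm id}_{A\mbox{-proj}}$.

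\textbf{Stage 2 (extending $\eta$ to $\mathbf{D}^b(A\mbox{-mod})$).} Under the identification $\mathbf{D}^b(A\mbox{-mod})\simeq \mathbf{K}^{-,b}(A\mbox{-proj})$ of Example \ref{exm:res}, I would extend $\theta$ to a natural isomorphism $\eta\colon F\Rightarrow F'$ with $F'(X)=X$ on objects. First, by induction on the number of nonzero terms, extend $\eta$ over $\mathbf{K}^b(A\mbox{-proj})$: any bounded complex of projectives fits into a triangle with a shorter one and a shift of a projective stalk, and the triangle-functor structure of $F$ (with the connecting iso $F\Sigma\cong \Sigma F$) produces the required extension uniquely from Stage 1. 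Second, for a general $P^\bullet\in\mathbf{K}^{-,b}(A\mbox{-proj})$, apply $F$ to the brutal truncation triangle (\ref{equ:tri1}); Lemma \ref{lem:H0iso}, applied to both $F$ and the identity, guarantees that the comparison isomorphism $\eta_{\sigma_{\geq-N}(P^\bullet)}$ stabilizes to yield $\eta_{P^\bullet}\colon F(P^\bullet)\rightarrow P^\bullet$ for all sufficiently large $N$. Transporting the triangle structure along $\eta$ turns $F'$ into a triangle functor naturally isomorphic to $F$.

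\textbf{Stage 3 (verifying $F'$ is a pseudo-identity).} Set $G=F'$. By construction $G(X)=X$ on every object. To check that $G|_{\Sigma^n(A\mbox{-mod})}$ is the identity for each $n\in\mathbb{Z}$, observe that by the construction in Stage 2 (via a projective resolution of $M\in A\mbox{-mod}$ and the compatibility with the shift), $\eta_{\Sigma^n(M)}$ is natural in $M$. Consequently, for any module map $f\colon M\rightarrow N$, the transported morphism $G(\Sigma^n f)=\eta_{\Sigma^n N}\circ F(\Sigma^n f)\circ \eta_{\Sigma^n M}^{-1}$ coincides with $\Sigma^n(f)$, as required.

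The principal obstacle is Stage 2: pushing the naturality of $\theta$ from the additive subcategory $A\mbox{-proj}$ through the triangulated structure of $\mathbf{D}^b(A\mbox{-mod})$. Since $F$ is a triangle functor rather than a dg or termwise construction, the extension must proceed via cones, shifts, and truncations; the passage from bounded to bounded-above complexes rests on the uniform boundedness provided by Lemma \ref{lem:H0iso}, which ensures that the comparison isomorphism stabilizes with truncation level. Compatibility of $\eta$ with the connecting isomorphism $F\Sigma\cong \Sigma F$ across all the choices is the delicate bookkeeping to be verified.
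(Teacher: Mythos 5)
There is a genuine gap, and it sits exactly where you yourself locate the ``delicate bookkeeping'': Stage 2 cannot be carried out from the input of Stage 1. In the inductive step you have a triangle $X \to Y \to Z \xrightarrow{w} \Sigma X$ (with $Z$ a shifted projective stalk and $X$ a shorter complex) and isomorphisms $\eta_X$, $\eta_Z$ already chosen; to produce \emph{any} isomorphism $F(Y)\to Y$ compatible with the two triangles you need the commutativity $\Sigma\eta_X\circ F(w) = w\circ \eta_Z$. But the connecting morphism $w$ is not a morphism between shifted projective stalks (its target is the shifted longer complex), so the naturality established in Stage 1 says nothing about it; and $\eta_X$ was itself produced by a non-canonical choice at an earlier cone step --- cones are not functorial --- so no naturality statement is available for it either. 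The claim that the triangle structure ``produces the required extension uniquely'' fails on both counts: existence requires the unverified square, and even granting it the completing isomorphism is never unique. The passage to bounded-above complexes is likewise unjustified: isomorphisms $\eta_{\sigma_{\geq-N}(P)}$ chosen independently for each $N$ do not assemble into $\eta_P$ (at best they identify cohomologies, and a complex is not determined by its cohomology; the homotopy limits one would need do not exist in $\mathbf{K}^{-,b}(A\mbox{-proj})$). Lemma \ref{lem:H0iso} cannot rescue this: in the analogous truncation argument inside the proof of Theorem \ref{thm:lift}, the comparison map $\theta_P$ is defined for \emph{all} $P$ simultaneously by a chain-level formula, so naturality is free and the truncation/Lemma \ref{lem:H0iso} argument is only used to check that this already-given map is a quasi-isomorphism; you have no formula, only object-by-object choices. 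This extension-through-cones problem is not bookkeeping --- it is precisely the fundamental obstruction of the subject (it is why the question whether every derived equivalence is standard remains open), so a proof that treats it as routine verification is incomplete.

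The paper's proof is structured to avoid this obstruction. First it proves that $F$ is an autoequivalence: the hypothesis yields that $F$ induces bijections ${\rm Hom}(A,\Sigma^n(A))\to{\rm Hom}(F(A),F\Sigma^n(A))$, Beilinson's Lemma then makes $F|_{\mathbf{K}^b(A\mbox{-proj})}$ an equivalence, and a genuine external theorem (\cite[Proposition 3.4]{C} or \cite[Theorem 6.2]{Kr}) is needed to conclude that $F$ is an autoequivalence of all of $\mathbf{D}^b(A\mbox{-mod})$. Your proposal omits this step entirely, although the conclusion requires it: a pseudo-identity is automatically an autoequivalence, so any correct proof must establish it somewhere. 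Second, the paper shows $F$ preserves the heart and $F|_{A\mbox{-}{\rm mod}}\cong {\rm Id}$ (your Stage 1 in different clothing). Third --- the crucial point --- it invokes \cite[Corollary 3.9]{CY} to pass from the heart to the whole derived category. That cited result accomplishes what your Stage 2 attempts, but with two inputs you lack: $F$ is already known to be an equivalence, and the d\'evissage runs along the good truncations $\tau_{\leq n}$, which, unlike cones and unlike your brutal truncations of projective complexes, are \emph{functors} on $\mathbf{D}^b(A\mbox{-mod})$ with natural truncation triangles; that functoriality is what substitutes for the naturality missing in a bare cone induction. To fix your proof you would have to either reprove that corollary or restructure the argument along the paper's lines; as written, Stages 2 and 3 assert the key facts rather than prove them.
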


\begin{proof}
We observe that $F$ induces isomorphisms
$${\rm Hom}_{\mathbf{D}^b(A\mbox{-}{\rm mod})}(A, \Sigma^n(A)) \longrightarrow {\rm Hom}_{\mathbf{D}^b(A\mbox{-}{\rm mod})}(F(A), F\Sigma^n(A))$$
for each integer $n$. The cases $n\neq 0$ are trivial, since both sides equal zero. If $n=0$, we just use the assumption $F(a)=\theta^{-1}\circ a\circ \theta$ for each endomorphism $a$ on $A$.

 We identify  $\mathbf{K}^b(A\mbox{-}{\rm proj})$ with the smallest triangulated subcategory of $\mathbf{D}^b(A\mbox{-}{\rm mod})$ containing $A$ and closed under direct summands. We observe that $F(\mathbf{K}^b(A\mbox{-}{\rm proj}))\subseteq \mathbf{K}^b(A\mbox{-}{\rm proj})$. By Beilinson's Lemma, the restriction $F|_{\mathbf{K}^b(A\mbox{-}{\rm proj})}$ is an equivalence. Then $F$ is an autoequivalence by applying the last statement in \cite[Proposition 3.4]{C} or, alternatively by the equivalence in \cite[Theorem 6.2]{Kr}.

 Recall that a complex $X$ lies in $A\mbox{-mod}$ if and only if ${\rm Hom}_{\mathbf{D}^b(A\mbox{-}{\rm mod})}(A, \Sigma^n(X))=0$ for $n\neq 0$. It follows from the equivalence $F$ that $F(X)$ lies in $A\mbox{-mod}$ for $X\in A\mbox{-mod}$. So, we have the restriction $F|_{A\mbox{-}{\rm mod}}\colon A\mbox{-mod}\rightarrow A\mbox{-mod}$. By the isomorphism $\theta$, it is standard  to see that $F|_{A\mbox{-}{\rm mod}}$ is isomorphic to the identity functor. Then we are done by \cite[Corollary 3.9]{CY}.
\end{proof}

 The following factorization theorem extends \cite[Proposition 5.8]{CY}, which is essentially due to \cite[Corollary 3.5]{Ric}.

\begin{thm}\label{thm:fac}
Let $F\colon \mathbf{D}^b(A\mbox{-{\rm mod}})\rightarrow \mathbf{D}^b(\mathcal{A})$ be a triangle equivalence. Then there is a factorization $F\simeq F_2F_1$ of triangle functors, where $F_1\colon \mathbf{D}^b(A\mbox{-{\rm mod}})\rightarrow \mathbf{D}^b(A\mbox{-{\rm mod}})$ is a pseudo-identity and $F_2\colon \mathbf{D}^b(A\mbox{-{\rm mod}})\rightarrow \mathbf{D}^b(\mathcal{A})$ is a liftable equivalence.

Moreover, such a factorization is unique. More precisely, for another factorization $F\simeq F'_2F'_1$ with $F'_1$ a pseudo-identity on  $\mathbf{D}^b(A\mbox{-{\rm mod}})$ and $F'_2$ a liftable equivalence, we have $F_1\simeq F'_1$ and $F_2\simeq F'_2$.
\end{thm}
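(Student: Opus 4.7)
The plan is to construct the liftable equivalence $F_2$ from the tilting data $T = F(A)$ using the canonical dg enhancement and a dg-Morita argument, then set $F_1 := F_2^{-1}F$ and recognise it as a pseudo-identity via Lemma \ref{lem:pseu}; uniqueness will then follow by showing that any functor which is simultaneously liftable and (isomorphic to) a pseudo-identity must be isomorphic to the identity, via Theorem \ref{thm:lift}. To build $F_2$, I would view $T=F(A)$ as an object of $\mathbf{D}^b_{\rm dg}(\mathcal{A})$ through ${\rm can}_\mathcal{A}$, and consider its endomorphism dg algebra $E := \mathbf{D}^b_{\rm dg}(\mathcal{A})(T, T)$. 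Since $F$ is a triangle equivalence, $H^i(E) \cong {\rm Hom}_{\mathbf{D}^b(A\mbox{-mod})}(A, \Sigma^i A)$ vanishes for $i \neq 0$ and equals $A^{\rm op}$ for $i=0$, so $E$ is formal in the strongest sense: the smart truncation supplies dg algebra quasi-isomorphisms
\[ A^{\rm op} \xleftarrow{\sim} \tau^{\leq 0}(E) \xrightarrow{\sim} E, \]
and composing with the inclusion of $E$ as the full sub-dg-category on $T$ inside $\mathbf{D}^b_{\rm dg}(\mathcal{A})$ yields a morphism $A^{\rm op} \dashrightarrow \mathbf{D}^b_{\rm dg}(\mathcal{A})$ in $\mathbf{Hodgcat}$ recording $T$ together with its $A^{\rm op}$-action. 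Using the identification $\mathbf{D}^b_{\rm dg}(A\mbox{-mod}) \simeq C^{-,b}_{\rm dg}(A\mbox{-proj})$ from Example \ref{exm:res} together with Lemma \ref{lem:fund}, I would extend this to a morphism $\tilde{F}_2\colon \mathbf{D}^b_{\rm dg}(A\mbox{-mod}) \dashrightarrow \mathbf{D}^b_{\rm dg}(\mathcal{A})$ by assembling the required dg bimodule from $T$ and its $A^{\rm op}$-action, exactly as in the proof of Theorem \ref{thm:lift}. The resulting triangle functor $F_2$ is liftable by construction, satisfies $F_2(A) \cong T$ compatibly with the $A^{\rm op}$-actions, and is an equivalence because $T$ generates $\mathbf{D}^b(\mathcal{A})$ (as the image of the generator $A$ under the equivalence $F$) with matching Hom complexes.

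Setting $F_1 := F_2^{-1}F$, the $A^{\rm op}$-compatibility of the isomorphism $F_2(A)\cong T=F(A)$ supplies an isomorphism $\theta\colon F_1(A)\to A$ satisfying $\theta\circ F_1(a)=a\circ\theta$ for each $a\in {\rm End}(A)$, whence Lemma \ref{lem:pseu} identifies $F_1$, up to isomorphism, with a pseudo-identity. For uniqueness, suppose $F\simeq F_2'F_1'$ is another such factorization; then $G := F_2^{-1}F_2' \simeq F_1(F_1')^{-1}$. Since liftable equivalences are closed under composition and inversion, $G$ is a liftable equivalence; since pseudo-identities are strictly identity on objects and on each $\Sigma^n(A\mbox{-mod})$, they are strictly closed under composition and admit strict inverses with the same properties, so $G$ is naturally isomorphic to a pseudo-identity. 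Applying Theorem \ref{thm:lift} with $B=A$, we obtain $G\simeq M\otimes^{\mathbb{L}}_A-$ for some bounded complex $M$ of $A$-$A$-bimodules with $M_A$ perfect; evaluating the natural isomorphism between $G$ and its pseudo-identity model at the object $A$ and using compatibility with right multiplication by elements of $A^{\rm op}={\rm End}(A)$ forces $M\cong A$ as an $A$-$A$-bimodule complex in $\mathbf{D}(A\otimes A^{\rm op})$, so $G\simeq {\rm Id}$, giving $F_2\simeq F_2'$ and $F_1\simeq F_1'$.

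The hard part will be the extension of the one-point morphism $A^{\rm op}\dashrightarrow \mathbf{D}^b_{\rm dg}(\mathcal{A})$ to the whole of $\mathbf{D}^b_{\rm dg}(A\mbox{-mod})$, together with the verification that the induced $F_2$ is an equivalence. In the module-category case $\mathcal{A}=B\mbox{-mod}$ Theorem \ref{thm:lift} handles both points at once, but for a general abelian $\mathcal{A}$ the analogous target-side bimodule classification is unavailable, so the extension must be built by hand via Lemma \ref{lem:fund} using the same Morita-type dg construction that underlies Theorem \ref{thm:lift}. The enabling technical point is the formality of $E$: its cohomology being concentrated in degree zero reduces what would otherwise be a delicate $A_\infty$-theoretic lifting problem to a plain zig-zag of dg functors in $\mathbf{Hodgcat}$.
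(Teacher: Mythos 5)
Your proposal shares the paper's starting data and endgame (set $T=F(A)$, form the dg endomorphism algebra of $T$ with its truncation zig-zag of quasi-isomorphisms, identify $F_1$ via Lemma \ref{lem:pseu}, deduce uniqueness from Theorem \ref{thm:lift}), but the step you yourself flag as ``the hard part'' is a genuine gap, and it is precisely where the paper's key idea lives. You propose to extend the one-object morphism $A^{\rm op}\dashrightarrow \mathbf{D}^b_{\rm dg}(\mathcal{A})$ to a quasi-functor defined on all of $\mathbf{D}^b_{\rm dg}(A\mbox{-mod})\simeq C^{-,b}_{\rm dg}(A\mbox{-proj})$ ``exactly as in the proof of Theorem \ref{thm:lift}''. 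That proof, however, is only available because its \emph{target} is a module category: both the bimodule formula $X_M(Q,P)={\rm Hom}_B(Q,M\otimes_AP)$ and the verification of quasi-representability (via the truncation triangle (\ref{equ:tri1}) and Lemma \ref{lem:H0iso}) take place inside complexes of $B$-modules. Over a general abelian $\mathcal{A}$ one can still write down the candidate bimodule $X(Q,P)=N(Q)\otimes_AP$, where $N$ records $T$ with its right $A$-action, but quasi-representability of $X(-,P)$ for a bounded-above, unbounded-below $P$ is exactly the assertion that ``$T\otimes^{\mathbb{L}}_AP$'' has bounded cohomology and is realized by an object of $\mathbf{D}^b(\mathcal{A})$. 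Nothing in your data gives this: the only globally defined functor around is $F$ itself, and $F$ has no known compatibility with $N$ beyond the single object $A$ (establishing such a compatibility is essentially the open question one is trying to circumvent); the amplitude-bounding argument of Lemma \ref{lem:H0iso} cannot be run either, because the functor induced by $N$ is so far defined only on $\mathbf{K}^b(A\mbox{-proj})$, which does not contain the simple modules when $A$ has infinite global dimension. The paper avoids the extension problem entirely by building the morphism in the \emph{opposite} direction: $\mathbf{D}^b_{\rm dg}(\mathcal{A})(T,-)$ is an honest dg functor on all of $\mathbf{D}^b_{\rm dg}(\mathcal{A})$ with values in $\Gamma\mbox{-DGMod}^{\rm fd}$ (finiteness coming from the equivalence $F$), it is transported to $A\mbox{-DGProj}^{\rm fd}$ along the quasi-equivalences induced by $\Gamma\hookleftarrow\tau_{\leq 0}(\Gamma)\twoheadrightarrow A$, and only at the end is the resulting morphism $\tilde{F}$ inverted in $\mathbf{Hodgcat}$, so that $F_2:=F(F_1)^{-1}$ has dg lift $(\tilde{F})^{-1}$.

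Two further points. First, your reason for $F_2$ being an equivalence --- ``$T$ generates $\mathbf{D}^b(\mathcal{A})$ as the image of the generator $A$'' --- is false in general: $A$ classically generates only the thick subcategory $\mathbf{K}^b(A\mbox{-proj})$, which is proper in $\mathbf{D}^b(A\mbox{-mod})$ whenever $A$ has infinite global dimension. The paper's order of argument repairs this: one first shows that the composite of $F$ with $H^0(\tilde{F})$ is isomorphic to a pseudo-identity, using only its behaviour at $A$ (Lemma \ref{lem:pseu}, whose proof needs Beilinson's Lemma together with \cite{C} or \cite{Kr} to pass from $\mathbf{K}^b(A\mbox{-proj})$ to all of $\mathbf{D}^b$); since pseudo-identities are automatically autoequivalences, the invertibility of $\tilde{F}$ (via Lemma \ref{lem:H0}) is a \emph{consequence}, not a hypothesis. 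Second, in the uniqueness step, evaluating the natural isomorphism only at the object $A$ and its endomorphisms does not force $M\cong A$ in $\mathbf{D}(A\otimes A^{\rm op})$: a map in $\mathbf{D}(A\mbox{-Mod})$ intertwining all right multiplications need not lift to a bimodule morphism. One needs the restriction of the isomorphism to all of $A\mbox{-mod}$ and an Eilenberg--Watts type argument; this is exactly what the paper outsources to \cite[Lemma 5.9]{CY}.
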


\begin{proof}
Set $T=F(A)$,  and $\Gamma={\rm End}_{\mathbf{D}_{\rm dg}^b(\mathcal{A})}(T)^{\rm op}$ to be the opposite dg endomorphism algebra of $T$ in $\mathbf{D}_{\rm dg}^b(\mathcal{A})$.

Recall that $H^n(\Gamma)$ is isomorphic to ${\rm Hom}_{\mathbf{D}^b(\mathcal{A})} (T, \Sigma^n(T))$ for each integer $n$. By the equivalence $F$, we infer that $H^n(\Gamma)=0$ for $n\neq 0$ and that $H^0(\Gamma)$ is isomorphic to $A$. Denote by $\tau_{\leq 0}(\Gamma)$ the good truncation of $\Gamma$, that is,  $\tau_{\leq 0}(\Gamma)=\bigoplus_{i<0} \Gamma^i \oplus {\rm Ker} d_\Gamma^0$. Then $\tau_{\leq 0}(\Gamma)$ is a dg subalgebra of $\Gamma$ and $H^0(\Gamma)$ is a quotient algebra of $\tau_{\leq 0}(\Gamma)$. Therefore, we have quasi-isomorphisms of dg algebras $\Gamma  \hookleftarrow\tau_{\leq 0}(\Gamma)\twoheadrightarrow A$.

 For each object $X\in \mathbf{D}_{\rm dg}^b(\mathcal{A})$, $\mathbf{D}_{\rm dg}^b(\mathcal{A})(T, X)$ is naturally a left dg $\Gamma$-module. We observe that $H^n(\mathbf{D}_{\rm dg}^b(\mathcal{A})(T, X))$ is isomorphic to ${\rm Hom}_{\mathbf{D}^b(\mathcal{A})}(T, \Sigma^n(X))$, which is further isomorphic to ${\rm Hom}_{\mathbf{D}^b(A\mbox{-{\rm mod}})}(A, \Sigma^nF^{-1}(X))$ by the equivalence $F$. It follows that $\mathbf{D}_{\rm dg}^b(\mathcal{A})(T, X)$ lies in $\Gamma\mbox{-{\rm DGMod}}^{\rm fd}$.

  We define a morphism $\tilde{F}\colon  \mathbf{D}_{\rm dg}^b(\mathcal{A}) \dashrightarrow  \mathbf{D}_{\rm dg}^b(A\mbox{-mod})$ by the following diagram.
 \[\xymatrix{
 \mathbf{D}_{\rm dg}^b(\mathcal{A}) \ar@{-->}[d]_-
 {\tilde{F}} \ar[rr]^-{\mathbf{D}_{\rm dg}^b(\mathcal{A})(T, -)} && \Gamma\mbox{-{\rm DGMod}}^{\rm fd}  \ar[rr]^-{\mathbf{p}_\Gamma} && \Gamma\mbox{-{\rm DGProj}}^{\rm fd}\\
 \mathbf{D}_{\rm dg}^b(A\mbox{-{\rm mod}}) \ar[rr]^-{\mathbf{p}'_A} && A\mbox{-{\rm DGProj}}^{\rm fd} &&   \tau_{\leq 0}(\Gamma)\mbox{-{\rm DGProj}}^{\rm fd}  \ar[ll]_-{A\otimes_{\tau_{\leq 0}(\Gamma)}-} \ar[u]_-{\Gamma\otimes_{\tau_{\leq 0}(\Gamma)}-}
 }\]
For the dg-projective resolution functor $\mathbf{p}_\Gamma$, we refer to Example \ref{exm:dgpr}. For the quasi-equivalence $\mathbf{p}'_A$, we refer to Example~\ref{exm:res}. The other two quasi-equivalences are induced by quasi-isomorphisms between dg algebras; see Example~\ref{exm:quasi}. The middle four dg categories are quasi-small. So we have to apply Remark~\ref{rem:quasi-sm} and view them in $\mathbf{Hodgcat}$.

By chasing the diagram, we observe that $H^0(\tilde{F})(T)$ is isomorphic to $A$. Consider the following composition
$$F_1\colon \mathbf{D}^b(A\mbox{-{\rm mod}}) \xrightarrow{F} \mathbf{D}^b(\mathcal{A}) \xrightarrow{{\rm can}_{A\mbox{-}{\rm mod}} H^0(\tilde{F}) {\rm can}_\mathcal{A}} \mathbf{D}^b(A\mbox{-{\rm mod}}).$$
We have an isomorphism $\theta\colon F_1(A)\rightarrow A$. Recall that  any morphism $a\colon A\rightarrow A$ in $\mathbf{D}^b(A\mbox{-mod})$ is given by the right multiplication by some element in $A$. Then by chasing the diagram for $\tilde{F}$, we observe that $\theta \circ F_1(a)=a\circ \theta$. By Lemma \ref{lem:pseu}, $F_1$ is isomorphic to a pseudo-identity. In particular, $F_1$ is an autoequivalence. Therefore, $H^0(\tilde{F})$ is also an equivalence, and thus by Lemma \ref{lem:H0} $\tilde{F}$ is an isomorphism in $\mathbf{Hodgcat}$. We observe that $F_2=F(F_1)^{-1}$ is liftable, whose dg lift is given by $(\tilde{F})^{-1}$.

For the uniqueness of factorizations, we just observe that $F'_1(F_1)^{-1}\simeq (F'_2)^{-1} F_2$ is liftable. By Theorem \ref{thm:lift}, $F'_1{F_1}^{-1}$ is standard, which is a pseudo-identity. By \cite[Lemma 5.9]{CY}, we infer  that $F'_1{F_1}^{-1}$ is isomorphic to the identity functor. Then we are done.
\end{proof}

\begin{cor}\label{cor:lift}
Assume that there are triangle equivalences among $\mathbf{D}^b(A\mbox{-{\rm mod}})$, $\mathbf{D}^b(\mathcal{A})$ and $\mathbf{D}^b(\mathcal{B})$. Then the following statements are equivalent:
\begin{enumerate}
\item The category $A\mbox{-}{\rm mod}$ is $\mathbf{D}$-standard;
\item Any triangle equivalence $\mathbf{D}^b(\mathcal{B}) \rightarrow \mathbf{D}^b(\mathcal{A})$ is liftable;
\item Any triangle autoequivalence on $\mathbf{D}^b(\mathcal{A})$ is liftable.
\end{enumerate}
\end{cor}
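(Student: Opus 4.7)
The plan is to prove the equivalence by establishing the cycle $(1)\Rightarrow(2)\Rightarrow(3)\Rightarrow(1)$. The main tools will be Theorem~\ref{thm:fac} (both the existence and, crucially, the uniqueness of the factorization into a pseudo-identity followed by a liftable equivalence), together with two bookkeeping facts already established in this paper: liftable functors are closed under composition, and a liftable equivalence is an isomorphism in $\mathbf{Hodgcat}$, so that its quasi-inverse is again liftable. I will also use the evident observation that the composition of two pseudo-identities is a pseudo-identity, which is immediate from the definition.

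For $(1)\Rightarrow(2)$, fix a triangle equivalence $\Psi\colon\mathbf{D}^b(A\mbox{-mod})\rightarrow\mathbf{D}^b(\mathcal{B})$ and, given any triangle equivalence $G\colon\mathbf{D}^b(\mathcal{B})\rightarrow\mathbf{D}^b(\mathcal{A})$, apply Theorem~\ref{thm:fac} to both $\Psi$ and $G\Psi$. By $\mathbf{D}$-standardness of $A\mbox{-mod}$, the pseudo-identity factor in each case is isomorphic to the identity, so both $\Psi$ and $G\Psi$ are liftable, whence so is $G\simeq(G\Psi)\Psi^{-1}$. For $(2)\Rightarrow(3)$, pick a triangle equivalence $\Xi\colon\mathbf{D}^b(\mathcal{B})\rightarrow\mathbf{D}^b(\mathcal{A})$ supplied by hypothesis; for any autoequivalence $H$ on $\mathbf{D}^b(\mathcal{A})$, both $\Xi$ and $H\Xi$ are triangle equivalences $\mathbf{D}^b(\mathcal{B})\rightarrow\mathbf{D}^b(\mathcal{A})$, and are thus liftable by $(2)$, so $H\simeq(H\Xi)\Xi^{-1}$ is liftable.

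The subtle step is $(3)\Rightarrow(1)$, where the uniqueness clause of Theorem~\ref{thm:fac} does the real work. Let $P$ be a pseudo-identity on $\mathbf{D}^b(A\mbox{-mod})$; the goal is $P\simeq\mathrm{id}$. Pick any triangle equivalence $\Phi\colon\mathbf{D}^b(A\mbox{-mod})\rightarrow\mathbf{D}^b(\mathcal{A})$ and write $\Phi\simeq F_2F_1$ with $F_1$ a pseudo-identity and $F_2$ liftable, via Theorem~\ref{thm:fac}. I then produce two presentations of $\Phi P\colon\mathbf{D}^b(A\mbox{-mod})\rightarrow\mathbf{D}^b(\mathcal{A})$ of the type described by the factorization theorem. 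On one side, $\Phi P\simeq F_2\cdot(F_1P)$, where $F_1P$ is a pseudo-identity as a composition of two such. On the other side, $\Phi P\simeq\bigl[(\Phi P\Phi^{-1})F_2\bigr]\cdot F_1$, where $\Phi P\Phi^{-1}$ is an autoequivalence of $\mathbf{D}^b(\mathcal{A})$ and hence liftable by~$(3)$, and $F_2$ is liftable, so the bracketed expression is liftable as well. Applying the uniqueness part of Theorem~\ref{thm:fac} to these two factorizations gives $F_1\simeq F_1P$; cancelling the autoequivalence $F_1$ on the left yields $P\simeq\mathrm{id}$, as desired.

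The main potential obstacle is ensuring that the two presentations of $\Phi P$ appearing in $(3)\Rightarrow(1)$ genuinely satisfy the hypotheses of the uniqueness statement of Theorem~\ref{thm:fac}; this reduces to the two checks highlighted above, namely that $F_1P$ is a pseudo-identity (immediate from the definition) and that $(\Phi P\Phi^{-1})F_2$ is liftable (composition of liftables). Beyond this, the entire argument is a matter of manipulating equivalences inside the class of liftable functors, so no further technical difficulty is anticipated.
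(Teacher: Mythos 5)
Your proposal is correct. The implications $(1)\Rightarrow(2)$ and $(2)\Rightarrow(3)$ match the paper's argument (the paper compresses $(2)\Rightarrow(3)$ to ``trivial''; your explicit use of $\Xi$ and $H\Xi$ is the intended reading, since an autoequivalence of $\mathbf{D}^b(\mathcal{A})$ is not literally a functor out of $\mathbf{D}^b(\mathcal{B})$). Where you genuinely diverge is $(3)\Rightarrow(1)$. The paper takes the pseudo-identity $P$, conjugates it by a liftable equivalence $F_2$ to get the autoequivalence $F_2PF_2^{-1}$ of $\mathbf{D}^b(\mathcal{A})$, deduces from $(3)$ and closure properties that $P$ itself is liftable, and then invokes the fact that a liftable pseudo-identity is isomorphic to the identity (via Theorem~\ref{thm:lift} plus \cite[Lemma 5.9]{CY}, as in the last paragraph of the proof of Theorem~\ref{thm:fac}). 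You instead exhibit two factorizations of $\Phi P$ and appeal to the uniqueness clause of Theorem~\ref{thm:fac} to conclude $F_1P\simeq F_1$, hence $P\simeq\mathrm{id}$. Both are valid; your version treats Theorem~\ref{thm:fac} as a black box and so avoids re-citing Theorem~\ref{thm:lift} and \cite[Lemma 5.9]{CY}, at the cost of the extra bookkeeping (checking that $F_1P$ is a pseudo-identity and that $(\Phi P\Phi^{-1})F_2$ is a liftable equivalence, both of which you do correctly). Since the uniqueness clause is itself proved by exactly the ``liftable pseudo-identity is the identity'' argument, the two routes rest on the same foundation; the paper's is marginally shorter, yours is arguably cleaner in that it isolates all the hard content inside the factorization theorem.
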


\begin{proof}
For ``(1) $\Rightarrow$ (2)", we apply Theorem \ref{thm:fac} to infer that all derived equivalences  $\mathbf{D}^b(A\mbox{-{\rm mod}})\rightarrow \mathbf{D}^b(\mathcal{A})$ and $\mathbf{D}^b(A\mbox{-{\rm mod}})\rightarrow \mathbf{D}^b(\mathcal{B})$ are liftable. Then (2) follows.  The implication ``(2) $\Rightarrow$ (3)" is trivial.

For ``(3) $\Rightarrow$ (1)", we take a pseudo-identity $F_1$ on $\mathbf{D}^b(A\mbox{-{\rm mod}})$. By Theorem \ref{thm:fac}, there is a liftable  equivalence $F_2\colon \mathbf{D}^b(A\mbox{-{\rm mod}})\rightarrow \mathbf{D}^b(\mathcal{A})$. Then $F_2F_1(F_2)^{-1}$ is a triangle autoequivalence on $\mathbf{D}^b(\mathcal{A})$, which is necessarily liftable. It follows that $F_1$ is also liftable. Recall that a liftable pseudo-identity is isomorphic to the identity functor; compare the last paragraph in the proof of Theorem \ref{thm:fac}. This proves (1).
\end{proof}

\begin{rem}
Keep the assumptions as above. We do not know the relation between these equivalent statements and the $\mathbf{D}$-standardness of the abelian categories $\mathcal{A}$ and $\mathcal{B}$.
\end{rem}

We are in a position to give the first proof to the theorem in the introduction. For a noetherian scheme $\mathbb{X}$, we denote by ${\rm coh}\mbox{-}\mathbb{X}$ the abelian category of coherent sheaves on $\mathbb{X}$.

\begin{thm}
Assume that there is a triangle equivalence between $\mathbf{D}^b(A\mbox{-}{\rm mod})$ and $\mathbf{D}^b({\rm coh}\mbox{-}\mathbb{X})$ with $\mathbb{X}$ a smooth projective scheme. Then $A\mbox{-}{\rm mod}$ is $\mathbf{D}$-standard, or equivalently, any triangle equivalence $F\colon \mathbf{D}^b(A\mbox{-}{\rm mod})\rightarrow \mathbf{D}^b(B\mbox{-}{\rm mod})$ is standard.
\end{thm}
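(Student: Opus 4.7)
The plan is to reduce the theorem to verifying condition (3) of Corollary~\ref{cor:lift} for $\mathcal{A}=\mathrm{coh}\mbox{-}\mathbb{X}$, namely that every triangle autoequivalence of $\mathbf{D}^b(\mathrm{coh}\mbox{-}\mathbb{X})$ is liftable. Once this is established, Corollary~\ref{cor:lift} immediately gives that $A\mbox{-}\mathrm{mod}$ is $\mathbf{D}$-standard, and the equivalence with standardness of every derived equivalence $F\colon \mathbf{D}^b(A\mbox{-}\mathrm{mod})\to \mathbf{D}^b(B\mbox{-}\mathrm{mod})$ is \cite[Theorem~5.10]{CY}, which is recalled in the discussion preceding Lemma~\ref{lem:pseu}.

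To verify (3) of Corollary~\ref{cor:lift}, the first step is to invoke Orlov's representability theorem: since $\mathbb{X}$ is smooth projective, every $k$-linear triangle autoequivalence of $\mathbf{D}^b(\mathrm{coh}\mbox{-}\mathbb{X})$ is isomorphic to a Fourier--Mukai functor $\Phi_{\mathcal{K}}=Rp_{2*}(\mathcal{K}\otimes^{\mathbb{L}} p_1^*(-))$ for some kernel $\mathcal{K}\in \mathbf{D}^b(\mathrm{coh}\mbox{-}(\mathbb{X}\times\mathbb{X}))$. The second step is to explain why every such Fourier--Mukai functor is liftable in the sense of this paper. The natural dg enhancement $\mathbf{D}_{\mathrm{dg}}^b(\mathrm{coh}\mbox{-}\mathbb{X})$ and its bimodule variant over $\mathbb{X}\times\mathbb{X}$ are related by a quasi-functor: the kernel $\mathcal{K}$, viewed through the canonical enhancement on $\mathbb{X}\times\mathbb{X}$, produces a dg $\mathbf{D}_{\mathrm{dg}}^b(\mathrm{coh}\mbox{-}\mathbb{X})$-$\mathbf{D}_{\mathrm{dg}}^b(\mathrm{coh}\mbox{-}\mathbb{X})$-bimodule whose associated morphism in $\mathbf{Hodgcat}$ (via Lemma~\ref{lem:fund}) induces $\Phi_{\mathcal{K}}$ on homotopy categories. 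This is precisely the geometric analogue of the calculation in Theorem~\ref{thm:lift}: one realises $\Phi_{\mathcal{K}}$ as tensoring with a perfect complex of bimodules lifted to the dg level.

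Assembling these ingredients, any autoequivalence $G$ of $\mathbf{D}^b(\mathrm{coh}\mbox{-}\mathbb{X})$ is isomorphic to some Fourier--Mukai $\Phi_{\mathcal{K}}$, and $\Phi_{\mathcal{K}}$ admits a dg lift $\tilde{\Phi}_{\mathcal{K}}\colon \mathbf{D}_{\mathrm{dg}}^b(\mathrm{coh}\mbox{-}\mathbb{X})\dashrightarrow \mathbf{D}_{\mathrm{dg}}^b(\mathrm{coh}\mbox{-}\mathbb{X})$, so $G$ is liftable. Condition (3) of Corollary~\ref{cor:lift} holds with $\mathcal{A}=\mathrm{coh}\mbox{-}\mathbb{X}$, so (1) holds: $A\mbox{-}\mathrm{mod}$ is $\mathbf{D}$-standard. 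By the Chen--Ye criterion \cite[Theorem 5.10]{CY}, every derived equivalence out of $\mathbf{D}^b(A\mbox{-}\mathrm{mod})$ is standard, finishing the proof.

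The main obstacle is the second step: rigorously producing the dg lift of a Fourier--Mukai functor, i.e.\ checking that the classical kernel $\mathcal{K}$ really determines, via the canonical enhancement on the product scheme and the formalism of Section~3, a quasi-functor whose induced morphism in $\mathbf{Hodgcat}$ recovers $\Phi_{\mathcal{K}}$ on $H^0$. This amounts to matching the sheaf-theoretic push-pull description of $\Phi_{\mathcal{K}}$ with the dg bimodule description of $\Phi_M$ from Lemma~\ref{lem:fund}; the remaining ingredients (Orlov's theorem and Corollary~\ref{cor:lift}) are then applied as black boxes.
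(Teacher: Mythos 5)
Your proposal follows essentially the same route as the paper's proof: Orlov's representability theorem, liftability of Fourier--Mukai functors, condition (3) of Corollary~\ref{cor:lift}, and finally \cite[Theorem 5.10]{CY}. The one step you single out as the main obstacle---rigorously producing a dg lift of a Fourier--Mukai functor from its kernel---is precisely what the paper disposes of by citing \cite[Proposition 6.11]{CS}, so your argument is correct and complete once that reference is invoked.
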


\begin{proof}
Recall from \cite{Or} that any triangle autoequivalence on $\mathbf{D}^b({\rm coh}\mbox{-}\mathbb{X})$  is a Fourier-Mukai functor, and thus liftable by \cite[Proposition 6.11]{CS}. Applying Corollary \ref{cor:lift} and \cite[Theorem 5.10]{CY}, we are done.
\end{proof}

\section{The objective categories}

In this section, we introduce the notions of objective categories and triangle-objective triangulated categories.
The basic examples of triangle-objective triangulated categories are the bounded derived categories of  coherent sheaves over  projective varieties over an algebraically closed field.

We say that an endofunctor $F$ on a category $\mathcal{A}$ is \emph{object-preserving}, if $F(X)\simeq X$ for each object $X\in \mathcal{A}$.

\begin{defn}
An additive category $\mathcal{A}$ is called \emph{objective}, provided that any object-preserving autoequivalence on $\mathcal{A}$ is isomorphic to the identity functor ${\rm Id}_\mathcal{A}$.

Similarly, a triangulated category $\mathcal{T}$ is called \emph{triangle-objective}, provided that any object-preserving triangle autoequivalence on $\mathcal{T}$ is isomorphic, as a triangle functor,   to the identity functor ${\rm Id}_\mathcal{T}$.
\end{defn}

The following observation motivates the above notions.

\begin{lem}\label{lem:obj}
Let $\mathcal{A}$ be an abelian category. Consider the following statements:
\begin{enumerate}
\item The abelian  category $\mathcal{A}$ is $\mathbf{D}$-standard and objective;
\item  The bounded derived category $\mathbf{D}^b(\mathcal{A})$ is triangle-objective;
\item  The abelian  category $\mathcal{A}$ is $\mathbf{D}$-standard.
\end{enumerate}
Then we have the implications ``$(1) \Rightarrow (2) \Rightarrow (3)$".
\end{lem}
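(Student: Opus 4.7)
The plan is to handle the two implications separately. The implication $(2) \Rightarrow (3)$ will be essentially tautological, and all the content of the lemma is in $(1) \Rightarrow (2)$, which I would prove by a two-stage transport-of-structure argument producing a pseudo-identity. For $(2) \Rightarrow (3)$, I would simply observe that a pseudo-identity $F$ on $\mathbf{D}^b(\mathcal{A})$ is by definition the identity on objects, hence in particular object-preserving, and is an autoequivalence by the remark recalled just after the definition of $\mathbf{D}$-standardness. Triangle-objectivity then forces $F \simeq \mathrm{Id}$, which is exactly what $\mathbf{D}$-standardness of $\mathcal{A}$ demands.

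For $(1) \Rightarrow (2)$, let $F\colon \mathbf{D}^b(\mathcal{A}) \to \mathbf{D}^b(\mathcal{A})$ be an arbitrary object-preserving triangle autoequivalence. The strategy is to replace $F$, via two successive transports along chosen isomorphisms, by a naturally isomorphic triangle functor $\hat F$ that is actually a pseudo-identity, so that the $\mathbf{D}$-standardness hypothesis then finishes the job. In the first round I would pick an isomorphism $\varphi_X\colon F(X) \xrightarrow{\sim} X$ for every object $X$ (possible by object-preservation) and set $\tilde F(X) = X$, $\tilde F(f) = \varphi_Y F(f) \varphi_X^{-1}$, transporting the triangulated structure along $\varphi$. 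The resulting $\tilde F$ is a triangle autoequivalence naturally isomorphic to $F$ which is the identity on objects on the nose; in particular it literally preserves each $\Sigma^n(\mathcal{A})$, and the restriction $\tilde F|_{\Sigma^n(\mathcal{A})}$ is an object-preserving autoequivalence of the abelian category $\Sigma^n(\mathcal{A}) \simeq \mathcal{A}$. The objectivity half of hypothesis (1) then supplies natural isomorphisms $\eta^n\colon \tilde F|_{\Sigma^n(\mathcal{A})} \xrightarrow{\sim} \mathrm{Id}_{\Sigma^n(\mathcal{A})}$.

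In the second round I would glue these $\eta^n$ together: set $\alpha_X = \eta^n_X$ when $X \in \Sigma^n(\mathcal{A})$ for the (unique) $n$, and $\alpha_X = \mathrm{id}_X$ otherwise, and transport $\tilde F$ along $\alpha$ to obtain $\hat F$. By naturality of each $\eta^n$, $\hat F$ restricts to the actual identity functor on every $\Sigma^n(\mathcal{A})$, while still $\hat F(X) = X$ for all objects $X$; thus $\hat F$ is a pseudo-identity. Invoking the $\mathbf{D}$-standardness half of (1) gives $\hat F \simeq \mathrm{Id}$, and consequently $F \simeq \tilde F \simeq \hat F \simeq \mathrm{Id}$, which is exactly triangle-objectivity of $\mathbf{D}^b(\mathcal{A})$.

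The main technical burden, rather than a conceptual obstacle, is checking that each round of object-wise transport actually produces an honest triangle functor naturally isomorphic to the original: that composition and identities are preserved (a direct calculation), and that the transported triangulated data is coherent. This is routine bookkeeping with the chosen isomorphisms, but is the only point in the argument where genuine care is needed.
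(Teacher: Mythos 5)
Your proof is correct, and its skeleton matches the paper's: reduce an object-preserving triangle autoequivalence to a pseudo-identity and then invoke $\mathbf{D}$-standardness, with $(2)\Rightarrow(3)$ being the tautological observation that a pseudo-identity is an object-preserving autoequivalence. The difference is in how the reduction to a pseudo-identity is carried out. The paper applies objectivity only once, to the restriction $F|_{\mathcal{A}}$ in degree zero, and then cites \cite[Corollary 3.9]{CY}, which propagates the resulting natural isomorphism to all shifts using the compatibility of $F$ with $\Sigma$. You instead first rectify $F$ to a functor $\tilde F$ that is strictly the identity on objects, apply objectivity separately in every degree $n$ to $\tilde F|_{\Sigma^n(\mathcal{A})}$, and glue the natural isomorphisms $\eta^n$ by a second transport to obtain an honest pseudo-identity. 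Your route is self-contained (it reproves the content of the cited corollary by hand) and is arguably more careful than the paper about the fact that $F$ need not literally preserve $\mathcal{A}$ or the $\Sigma^n(\mathcal{A})$ before rectification; the price is a redundant, degree-by-degree use of objectivity, where the citation shows that the degree-zero case already suffices. The remaining ``routine bookkeeping'' you flag (transport of the triangulated structure along a family of isomorphisms, and $\hat F|_{\Sigma^n(\mathcal{A})}=\mathrm{Id}$ via naturality of $\eta^n$) is indeed standard and goes through.
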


\begin{proof}
To see ``$(1) \Rightarrow (2)$", we take an object-preserving triangle autoequivalence $F$ on $\mathbf{D}^b(\mathcal{A})$. The restriction $F|_\mathcal{A}$ is object-preserving. By the assumptions in (1), we infer that $F|_\mathcal{A}$ is isomorphic to the identity functor ${\rm Id}_\mathcal{A}$. By \cite[Corollary 3.9]{CY}, $F$ is isomorphic to a pseudo-identity on $\mathbf{D}^b(\mathcal{A})$. Since $\mathcal{A}$ is $\mathbf{D}$-standard, we infer that $F$ is isomorphic to the identity functor.  The implication ``$(2)\Rightarrow (3)$" is clear, since any pseudo-identity is object-preserving.
\end{proof}

Let $R$ be a commutative noetherian $k$-algebra.  Denote by $R\mbox{-mod}$ the abelian category of finitely generated $R$-modules.

Given a $k$-algebra automorphism $\sigma\colon R\rightarrow R$ and an $R$-module $M$, we denote by ${^\sigma}(M)$ the twisted module: the new $R$-action is given by $a_\circ m=\sigma^{-1}(a).m$, where the dot ``$.$" denotes the $R$-action on $M$. This gives rise to the \emph{twist automorphism}
$$^\sigma(-)\colon R\mbox{-mod}\longrightarrow R\mbox{-mod}.$$

\begin{exm}
 {\rm Denote by $k[\epsilon]$ the algebra of dual numbers. By \cite[Theorem~7.1]{CY}, $k[\epsilon]\mbox{-mod}$ is $\mathbf{D}$-standard. However, $k[\epsilon]\mbox{-mod}$ is not objective and $\mathbf{D}^b(k[\epsilon]\mbox{-mod})$ is not triangle-objective, provided that the field $k$ contains at least three elements.

 Fix $a\in k$ satisfying $a\neq 0, 1$. Consider the automorphism $\sigma$ on $k[\epsilon]$ such that $\sigma(\epsilon)=a\epsilon$. The twist automorphisms $^\sigma(-)$,  defined on $k[\epsilon]\mbox{-mod}$ and  $\mathbf{D}^b(k[\epsilon]\mbox{-mod})$,  are both object-preserving, but neither is isomorphic to the identity functor.}
\end{exm}

The following condition arises naturally.

\vskip 3pt

 Condition $({\rm Obj})$:  any $k$-algebra automorphism $\sigma\colon R\rightarrow R$ satisfying $\sigma(I)=I$ for each ideal $I$, necessarily equals ${\rm Id}_R$.
\vskip 3pt

\begin{lem}
Let $R$ be a commutative noetherian ring satisfying {\rm Condition $({\rm Obj})$}. Then $R\mbox{-}{\rm mod}$ is objective.
\end{lem}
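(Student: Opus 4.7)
The plan is to associate to any object-preserving autoequivalence $F\colon R\mbox{-}{\rm mod}\rightarrow R\mbox{-}{\rm mod}$ a $k$-algebra automorphism $\sigma$ of $R$, verify that $\sigma$ preserves every ideal (so that Condition (Obj) forces $\sigma=\mathrm{Id}_R$), and finally lift this identity to a natural isomorphism $F\simeq \mathrm{Id}$. First, I would fix an isomorphism $\theta\colon F(R)\rightarrow R$, which exists by object-preservation. Since the canonical identification ${\rm End}_R(R)\cong R$ sends $r$ to the multiplication map $r\cdot_R$, I define $\sigma(r)\in R$ by the rule $\sigma(r)\cdot_R=\theta\circ F(r\cdot_R)\circ \theta^{-1}$. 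The functoriality of $F$, together with the commutativity of $R$, implies that $\sigma$ is a ring endomorphism; since $F$ is an equivalence, $\sigma$ is in fact an automorphism.

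To show $\sigma(I)=I$ for every ideal $I$ of $R$, I would use that $F$ is exact as an autoequivalence of an abelian category: applying $F$ to $0\rightarrow I\rightarrow R\rightarrow R/I\rightarrow 0$ and transporting along $\theta$, the submodule $J:=\theta(F(I))\subseteq R$ is an ideal with $R/J\cong F(R/I)$. Since $F(R/I)\cong R/I$ by the object-preservation hypothesis, the modules $R/J$ and $R/I$ are isomorphic, and comparing annihilators forces $J=I$. Now for any $r\in I$, the map $r\cdot_R$ factors through the inclusion $I\hookrightarrow R$; applying $F$ and conjugating by $\theta$ shows that $\sigma(r)\cdot_R$ factors through $J=I$, so the principal ideal $(\sigma(r))$ lies in $I$, whence $\sigma(r)\in I$. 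The same argument applied to $F^{-1}$, which is itself an object-preserving autoequivalence with associated automorphism $\sigma^{-1}$, gives the reverse inclusion, so $\sigma(I)=I$.

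By Condition (Obj), $\sigma=\mathrm{Id}_R$. It remains to upgrade this identity to a natural isomorphism $\eta\colon F\rightarrow \mathrm{Id}$. I would set $\eta_{R^n}:=\theta^{\oplus n}$, and for a general $M\in R\mbox{-}{\rm mod}$ use the noetherian hypothesis to pick a presentation $R^m\xrightarrow{A}R^n\rightarrow M\rightarrow 0$; since $F$ is additive and $\sigma=\mathrm{Id}_R$, the matrix of $F(A)$ with respect to $\theta^{\oplus m}$ and $\theta^{\oplus n}$ agrees with $A$ itself, so the presentation descends to a well-defined isomorphism $\eta_M\colon F(M)\rightarrow M$, independent of the chosen presentation, and naturality along a morphism $M\rightarrow N$ follows by lifting it to a morphism of presentations. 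The main obstacle is the identification $\theta(F(I))=I$ in the second step: it rests on the fact that an ideal $I\subseteq R$ is recovered from $R/I$ as its annihilator, which is what converts the mere object-level isomorphism $F(R/I)\simeq R/I$ into the submodule-level equality required. The third step is essentially mechanical once $\sigma=\mathrm{Id}_R$ is at hand.
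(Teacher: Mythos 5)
Your proposal is correct and follows essentially the same route as the paper: extract a $k$-algebra automorphism $\sigma$ from the isomorphism $F(R)\simeq R$, use the annihilator of $R/I$ to show $\sigma$ preserves every ideal, and invoke Condition (Obj). The only difference is one of packaging: the paper immediately identifies $F$ with the twist automorphism ${}^{\sigma}(-)$ and reads off both $\sigma(I)=I$ and the final isomorphism $F\simeq\mathrm{Id}$ from that identification, whereas you reconstruct this standard Morita-type fact by hand via free presentations.
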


\begin{proof}
Assume that   $F\colon R\mbox{-mod}\rightarrow R\mbox{-mod}$ is  an object-preserving autoequivalence. Since $F(R)\simeq R$, it follows that $F$ is isomorphic to the twist automorphism ${^\sigma}(-)$ for some automorphism $\sigma$. We observe that ${^\sigma}(R/I)\simeq R/\sigma(I)$ for each ideal $I$. By the isomorphism ${^\sigma}(R/I)\simeq R/I$ and taking their annihilator ideals, we infer that $\sigma(I)=I$. By Condition ${\rm (Obj)}$, we have $\sigma={\rm Id}_R$. Consequently, $F$ is isomorphic to the identity functor.
\end{proof}

Here are some examples of rings satisfying Condition  $({\rm Obj})$.

\begin{exm}\label{exm:obj}
{\rm (1) The polynomial algebras satisfy Condition  $({\rm Obj})$. More generally, we assume that $R$ is an integral domain such that any invertible element is a scalar. Then $R$ satisfies Condition  $({\rm Obj})$.

To verify the condition, we take an automorphism  $\sigma\colon R\rightarrow R$ satisfying $\sigma(I)=I$. For any non-scalar $a\in R$, we have $\sigma(Ra)=R\sigma(a)=Ra$. It follows that $\sigma(a)=\lambda a$ for some $\lambda\in k$. Similarly, $\sigma(1+a)=\lambda'(1+a)$ for some $\lambda'\in k$. By comparing these two identities, we infer that $\lambda=1=\lambda'$.

(2) Any reduced affine algebra over an algebraically closed field satisfies Condition  $({\rm Obj})$. More generally, we assume that the Jacobson radical of $R$ is zero and that for each maximal ideal $\mathfrak{m}$, the natural homomorphism $k\rightarrow R/\mathfrak{m}$ is an isomorphism.  Then $R$ satisfies Condition  $({\rm Obj})$.

For the verification, we claim that $a-\sigma(a)$ is contained in any maximal ideal $\mathfrak{m}$. By assumption, there is some $\lambda\in k$ satisfying $a-\lambda\in \mathfrak{m}$. Then we have  $\sigma(a)-\lambda\in \sigma(\mathfrak{m})=\mathfrak{m}$. The claim follows immediately.}
\end{exm}

The following result shows that objective categories are ubiquitous in algebraic geometry. For a sheaf $\mathcal{F}$, we denote by ${\rm supp}(\mathcal{F})$ its support.

\begin{prop}\label{prop:coh}
Let $(\mathbb{X}, \mathcal{O})$ be a noetherian scheme such that there is a finite affine open covering $\mathbb{X}=\bigcup U_i$, where $U_i={\rm Spec}(R_i)$ with each $R_i$ satisfying {\rm Condition} $({\rm Obj})$. Then ${\rm coh}\mbox{-}\mathbb{X}$ is objective.

Assume further that $\mathbb{X}$ is projective such that the maximal torsion subsheaf $T_0(\mathcal{O})\subseteq \mathcal{O}$ of dimension zero is trivial. Then $\mathbf{D}^b({\rm coh}\mbox{-}\mathbb{X})$ is triangle-objective.
\end{prop}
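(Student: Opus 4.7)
The plan is to treat the two assertions in order, with the second built on the first via Lemma~\ref{lem:obj}.

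For the objectivity of ${\rm coh}\mbox{-}\mathbb{X}$, let $F$ be an object-preserving autoequivalence. Since $F(\mathcal{F})\simeq\mathcal{F}$ for every coherent $\mathcal{F}$, the support of $\mathcal{F}$ is preserved; consequently $F$ sends the Serre subcategory $\mathcal{S}_i$ of sheaves supported in $\mathbb{X}\setminus U_i$ into itself, and thereby descends to an object-preserving autoequivalence $F_i$ of the Serre quotient ${\rm coh}\mbox{-}\mathbb{X}/\mathcal{S}_i\simeq {\rm coh}\mbox{-}U_i\simeq R_i\mbox{-}{\rm mod}$. The preceding lemma, applied with Condition~$({\rm Obj})$ on $R_i$, yields $F_i\simeq {\rm Id}$. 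The remaining work is to glue these local trivialisations into a global natural isomorphism $F\simeq {\rm Id}_{{\rm coh}\mbox{-}\mathbb{X}}$: I would either argue directly on sections over each $U_i$, using that the trivialisations agree on overlaps (since each $F_i$ is trivial), or invoke Gabriel--Rosenberg-style reconstruction to present $F$ as a twist-by-line-bundle composed with a scheme automorphism, and then rule out both components using the local triviality together with $F(\mathcal{O})\simeq\mathcal{O}$.

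For the triangle-objectivity, take an object-preserving triangle autoequivalence $F$ on $\mathbf{D}^b({\rm coh}\mbox{-}\mathbb{X})$. The condition $F(\mathcal{F})\simeq\mathcal{F}$ for coherent $\mathcal{F}$ forces $F(\mathcal{F})$ to be a stalk complex, so $F$ preserves the heart ${\rm coh}\mbox{-}\mathbb{X}$ and its restriction there is an object-preserving exact autoequivalence. By the first part this restriction is isomorphic to ${\rm Id}$, and so by \cite[Corollary 3.9]{CY} the functor $F$ is isomorphic to a pseudo-identity on $\mathbf{D}^b({\rm coh}\mbox{-}\mathbb{X})$. By Lemma~\ref{lem:obj} it remains to verify that ${\rm coh}\mbox{-}\mathbb{X}$ is $\mathbf{D}$-standard under the hypotheses that $\mathbb{X}$ is projective and $T_0(\mathcal{O})=0$, i.e.\ that every pseudo-identity on $\mathbf{D}^b({\rm coh}\mbox{-}\mathbb{X})$ is isomorphic to the identity.

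This last step is the main obstacle. The expected mechanism: a pseudo-identity $G$ acts as a graded $k$-algebra automorphism on $\bigoplus_n {\rm Hom}_{\mathbf{D}^b}(\mathcal{O},\Sigma^n\mathcal{O})=\bigoplus_n H^n(\mathbb{X},\mathcal{O})$, is already the identity in degree zero, and the hypothesis $T_0(\mathcal{O})=0$ rigidifies $\mathcal{O}$ enough (by forbidding any non-trivial contribution from isolated zero-dimensional pieces, which is exactly the source of ``ghost'' freedom in pseudo-identities) to force $G$ to act as the identity on all such ${\rm Ext}$-groups. Combined with an ample line bundle $\mathcal{L}$ on $\mathbb{X}$ and a devissage using the twists $\mathcal{O}(m)$ for $m\gg 0$ together with skyscraper sheaves at closed points, this triviality propagates to every Hom-space in $\mathbf{D}^b({\rm coh}\mbox{-}\mathbb{X})$, yielding a natural isomorphism $G\simeq{\rm Id}$.
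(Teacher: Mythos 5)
Your overall architecture matches the paper's (prove objectivity of ${\rm coh}\mbox{-}\mathbb{X}$ first, then reduce triangle-objectivity to $\mathbf{D}$-standardness via Lemma \ref{lem:obj}), but both halves are left with genuine gaps. In the first half, the step you defer --- gluing the local trivialisations $F_i\simeq{\rm Id}$ into a global isomorphism $F\simeq{\rm Id}_{{\rm coh}\mbox{-}\mathbb{X}}$ --- is precisely the nontrivial point, and your first suggested route (checking that the trivialisations ``agree on overlaps'') does not obviously produce a natural transformation defined on all of ${\rm coh}\mbox{-}\mathbb{X}$. The paper sidesteps the gluing problem by reversing the order: it first invokes the reconstruction theorem \cite[Theorem 5.4]{Bra} (your route (b)) to write $F\simeq\theta^*$ for a unique scheme automorphism $\theta$ (there is no line-bundle twist to rule out, since $F(\mathcal{O})\simeq\mathcal{O}$ is exactly the hypothesis of that theorem); it then shows $\theta^{-1}(Z)=Z$ for every closed subset $Z$ by comparing supports of $\mathcal{O}/\mathcal{I}$ and $\theta^*(\mathcal{O}/\mathcal{I})$, so that $\theta$ restricts to each $U_i$ as ${\rm Spec}(\sigma_i)$, and each $\sigma_i$ is killed by Condition $({\rm Obj})$ essentially as you intend. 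So you should commit to route (b); route (a) as stated is not a proof.

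The second half has the more serious gap. You correctly reduce to showing that ${\rm coh}\mbox{-}\mathbb{X}$ is $\mathbf{D}$-standard, but your ``expected mechanism'' is only a heuristic: knowing that a pseudo-identity acts as the identity on $\bigoplus_n H^n(\mathbb{X},\mathcal{O})$ does not by itself propagate to all Hom-spaces, and the devissage you gesture at is exactly the hard content requiring proof. Moreover your reading of the hypothesis $T_0(\mathcal{O})=0$ is off: in the paper it is not used to ``rigidify'' Ext-groups but is precisely the hypothesis of \cite[Lemma 9.2]{LO}, which guarantees that ${\rm coh}\mbox{-}\mathbb{X}$ admits an ample sequence in the sense of \cite{Or}; $\mathbf{D}$-standardness then follows from \cite[Proposition 5.7]{CY}, which says that an abelian category with an ample sequence is $\mathbf{D}$-standard. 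Without citing (or reproving) these two results, your argument for the second statement does not close.
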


\begin{proof}
Let $F\colon {\rm coh}\mbox{-}\mathbb{X}\rightarrow {\rm coh}\mbox{-}\mathbb{X}$ be an object-preserving auto-equivalence. In particular, $F$ fixes the structure sheaf $\mathcal{O}$.  It is well-known that there is a unique automorphism $\theta$ on $\mathbb{X}$ such that $F\simeq \theta^*$, the pullback functor; see \cite[Theorem 5.4]{Bra}.

For each closed subset $Z\subseteq \mathbb{X}$, we have an ideal sheaf $\mathcal{I}$ with ${\rm supp}(\mathcal{O}/\mathcal{I})=Z$. Then we have ${\rm supp}(\theta^*(\mathcal{O}/\mathcal{I}))=\theta^{-1}(Z)$. By the isomorphism $\theta^*(\mathcal{O}/\mathcal{I})\simeq \mathcal{O}/\mathcal{I}$, we infer that $\theta^{-1}(Z)=Z$.  In particular, for the given affine open subsets $U_i$, we have $\theta^{-1}(U_i)=U_i$.  Therefore, the restriction $\theta|_{U_i}\colon U_i\rightarrow U_i$ corresponds to an $k$-algebra automorphism $\sigma_i$ on $R_i$, that is, $\theta|_{U_i}={\rm Spec}(\sigma_i)$.

We have the following commutative diagram
\[\xymatrix{
{\rm coh}\mbox{-}\mathbb{X}\ar[d]^-{\theta^*} \ar[r]^{\rm res} & {\rm coh}\mbox{-}U_i \ar[d]^-{(\theta|_{U_i})^*} \ar@{=}[r] & R_i\mbox{-mod} \ar[d]^-{^{\sigma_i}(-)}\\
{\rm coh}\mbox{-}\mathbb{X} \ar[r]^{\rm res} & {\rm coh}\mbox{-}U_i \ar@{=}[r] & R_i\mbox{-mod},
}\]
where ``res" is the restriction functor, and we identify ${\rm coh}\mbox{-}U_i$ with $R_i\mbox{-mod}$. The restriction functor ``res" induces the well-known equivalence between ${\rm coh}\mbox{-}U_i$ and the Serre quotient category of ${\rm coh}\mbox{-}\mathbb{X}$ by those sheaves supported on the complement of $U_i$; compare \cite[Example 4.3]{Bra}. It follows that $(\theta|_{U_i})^*$ and thus ${^{\sigma_i}(-)}$ are object-preserving. By the assumption on $R_i$, it follows that $\sigma_i={\rm Id}_{R_i}$ and thus $\theta|_{U_i}={\rm Id}_{U_i}$ for each $i$. Therefore, $\theta={\rm Id}_\mathbb{X}$, proving the first statement.

For the last statement, we apply \cite[Lemma 9.2]{LO} to infer that ${\rm coh}\mbox{-}\mathbb{X}$ has an ample sequence in the sense of \cite{Or}. By \cite[Proposition 5.7]{CY}, we deduce that ${\rm coh}\mbox{-}\mathbb{X}$ is $\mathbf{D}$-standard. Using the proved statement and Lemma \ref{lem:obj}, we are done.
\end{proof}

By Example \ref{exm:obj}(2),  a reduced projective scheme over an algebraically closed field satisfies the above conditions. Hence, the following immediate consequence of Proposition \ref{prop:coh} and Lemma \ref{lem:obj} gives the second proof to the theorem in the introduction, when the field $k$ is algebraically closed. As a consequence, the smooth hypothesis of the scheme can be relaxed.

\begin{cor}
Let $A$ be a finite dimensional algebra. Assume that there is a triangle equivalence between $\mathbf{D}^b(A\mbox{-}{\rm mod})$ and  $\mathbf{D}^b({\rm coh}\mbox{-}\mathbb{X})$ for a projective scheme $\mathbb{X}$ satisfying the conditions in Proposition \ref{prop:coh}. Then $\mathbf{D}^b(A\mbox{-}{\rm mod})$ is triangle-objective, and thus  $A\mbox{-}{\rm mod}$ is $\mathbf{D}$-standard. \hfill $\square$
\end{cor}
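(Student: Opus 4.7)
The plan is to observe that the corollary is an essentially formal consequence of Proposition~\ref{prop:coh} together with Lemma~\ref{lem:obj}, once we record the (easy) fact that triangle-objectivity is invariant under triangle equivalence. So the argument factors into three lines: transport, apply, conclude.

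First, I would state and verify the transfer principle. Let $\Phi\colon \mathcal{T}\stackrel{\sim}\longrightarrow \mathcal{T}'$ be a triangle equivalence and suppose $\mathcal{T}'$ is triangle-objective. Take any object-preserving triangle autoequivalence $G\colon \mathcal{T}\to \mathcal{T}$ and set $F=\Phi G \Phi^{-1}$, a triangle autoequivalence of $\mathcal{T}'$. For each $Y\in \mathcal{T}'$, writing $X=\Phi^{-1}(Y)$, we have $F(Y)=\Phi(G(X))\simeq \Phi(X)\simeq Y$, so $F$ is object-preserving. By hypothesis $F\simeq {\rm Id}_{\mathcal{T}'}$, and conjugating back gives $G\simeq \Phi^{-1}F\Phi\simeq {\rm Id}_\mathcal{T}$. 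Hence $\mathcal{T}$ is also triangle-objective.

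Next, I would simply apply Proposition~\ref{prop:coh}: the conditions on $\mathbb{X}$ are precisely the hypotheses of that proposition, so $\mathbf{D}^b({\rm coh}\mbox{-}\mathbb{X})$ is triangle-objective. Transporting across the given triangle equivalence $\mathbf{D}^b(A\mbox{-}{\rm mod})\simeq \mathbf{D}^b({\rm coh}\mbox{-}\mathbb{X})$ by the principle just established, $\mathbf{D}^b(A\mbox{-}{\rm mod})$ is triangle-objective. The implication ``$(2)\Rightarrow (3)$'' of Lemma~\ref{lem:obj} then yields that $A\mbox{-}{\rm mod}$ is $\mathbf{D}$-standard, which is the remaining assertion.

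There is no genuine obstacle here; the only content beyond invoking Proposition~\ref{prop:coh} and Lemma~\ref{lem:obj} is the routine conjugation check, which one may even fold into the statement. It is worth noting that, via \cite[Theorem~5.10]{CY} or equivalently Corollary~\ref{cor:lift}, the conclusion $A\mbox{-}{\rm mod}$ is $\mathbf{D}$-standard immediately upgrades to: every derived equivalence $\mathbf{D}^b(A\mbox{-}{\rm mod})\rightarrow \mathbf{D}^b(B\mbox{-}{\rm mod})$ is standard. This explains the assertion in the introduction that the smoothness assumption on $\mathbb{X}$ can be dropped in the algebraically closed case.
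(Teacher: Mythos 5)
Your proposal is correct and matches the paper's (implicit) argument: the paper states the corollary as an immediate consequence of Proposition~\ref{prop:coh} and Lemma~\ref{lem:obj}, and the only content you add is the routine conjugation check that triangle-objectivity transports along a triangle equivalence, which is exactly the step the authors leave unstated.
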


\vskip 5pt

\noindent {\bf Acknowledgements}.\quad  This work is supported by the National Natural Science Foundation of China (Nos. 11522113 and 11671245),  the Fundamental Research Funds for the Central Universities,  and Anhui Initiative in Quantum Information Technologies (AHY150200).

\bibliography{}

\vskip 10pt

 {\footnotesize \noindent Xiaofa Chen, Xiao-Wu Chen\\
 Key Laboratory of Wu Wen-Tsun Mathematics, Chinese Academy of Sciences,\\
 School of Mathematical Sciences, University of Science and Technology of China, Hefei 230026, Anhui, PR China}

\end{document}